\newcommand\doubleRule{\toprule\toprule}
\newtheorem{theorem}{Theorem}[section]
\newtheorem{Assumption}[theorem]{Assumption}
\newtheorem{lemma}[theorem]{Lemma}
\newtheorem{proposition}[theorem]{Proposition}
\newtheorem{remark}[theorem]{Remark}
\newenvironment{proof}[1][Proof]{\noindent\textit{#1.} }{\hfill \rule{0.5em}{0.5em}}
\numberwithin{equation}{section}
\begin{document}

\title{\textbf{A Holling's predator-prey model with handling and searching
predators}}
\author{\textsc{Sze-Bi Hsu$^{(a)}$, Zhihua Liu$^{(b)}${\small \thanks{Research was partially supported by NSFC and CNRS (Grant Nos. 11871007 and 11811530272) and the Fundamental Research Funds for the Central Universities.}} and Pierre Magal$^{(c)}${\small \thanks{Research was partially supported by CNRS and  National Natural Science Foundation of China (Grant No.11811530272)}}}\\
{\small \textit{$^{(a)}$ Department of Mathematics and The National Center for Theoretical Science,}}\\
{\small \textit{National TsingHua University, Hsinchu 300, Taiwan  }}\\
$^{(b)}${\small \textit{School of Mathematical Sciences, Beijing Normal
University,}}\\
{\small \textit{Beijing 100875, People's Republic of China }} \\
$^{(c)}${\small \textit{Univ. Bordeaux, IMB, UMR 5251, F-33400 Talence, France.}} \\
{\small \textit{CNRS, IMB, UMR 5251, F-33400 Talence, France.}}
}
\date{}
\maketitle

\begin{abstract}
The goal of this paper is to explain how to derive the classical
Rosenzweig-MacArthur's model by using a model with two groups of predators
in which we can separate the vital dynamic and consumption of prey to
describe the behavior of the predators. This will be especially very
convenient if we want to add an age or size structure to the predator
population. As mentioned by Holling (without mathematical model), we divide
the population of predators into the searching and the handling predators.
In this article we study some properties of this model and conclude the
paper proving that the model converges to the classical
Rosenzweig-MacArthur's model by using an appropriate rescalling. We also apply this model to the Canadian snowshoe hares and lynxes.
\end{abstract}


\noindent \textbf{Key words:} handling and searching predators, dissipative
system, uniform persistence, equilibrium, limit cycle, type-$\mathrm{K}$
competitive systems.

\noindent \textbf{Mathematics Subject Classification:} 34C25, 34K20, 34G20,
92D25


\section{Introduction}

The article is devoted to the following predator prey system with handling
and searching predators
\begin{equation}  \label{1.1}
\left\{
\begin{array}{lll}
N^\prime & =\underset{\text{Logistic growth}}{\underbrace{
\beta_{N}N-\mu_{N}N-\delta N^2}} \underset{\text{Consumption of prey by
predators}}{\underbrace{-N \, \kappa \,P_{S}}} &  \\
P_{S}^\prime & =\underset{\text{Mortality}}{\underbrace{-(\mu_{P}+\eta)P_{S}
}} \underset{\text{Searching becoming handling}}{\underbrace{-P_{S}\, \rho
\, \kappa\, N}}+ \gamma P_{H} &  \\
P_{H}^\prime & = \underset{\text{Mortality}}{\underbrace{-\mu_{P}P_{H}}}
\hspace{1.5cm}+ P_{S}\, \rho \, \kappa\, N \underset{\text{Handling becoming
searching}}{\underbrace{-\gamma P_{H}}}+\underset{\text{New born predator}}{%
\underbrace{\beta_{P}\left( P_{S}+P_{H} \right)}} &
\end{array}
\right.
\end{equation}
where $N(t)$ is the number of prey at time $t$, $P_{S}(t)$ is the number of
predators searching for preys at time $t$, and $P_{H}(t)$ is the number of
predators handling the preys at time $t$.

Here the terminology "handling and searching predators" refers to Holling
himself \cite{Holling59b}. In the model \eqref{1.1}, the term $%
\beta_{P}\left( P_{S}(t)+P_{H}(t) \right)$ is the flux of new born
predators. Here we assume that all the new born predators are handlers. The
parameter $\rho$ should be interpreted as a conversion rate. The term $%
P_{S}(t)\, \rho \, \kappa\, N(t)$ (in the $P_S$-equation or the $P_H$%
-equation) is a flux of searching predators becoming handling predators. The
term $\gamma P_{H}(t)$ (in the $P_S$-equation or the $P_H$-equation) is the
flux of handling predators becoming searching predators. The term $\mu_{P}$
is the natural mortality of the predators and $\eta$ is an extra mortality
term for the searching predators only. The term $N(t)\, \kappa \, P_{S}(t)$
in the $N$-equation corresponds to the consumption of the preys by the
predators. The part $\beta_{N}N(t)-\mu_{N}N(t)-\delta N(t)^2$ in the $N$%
-equation is the standard logistic equation.

The main idea about this model is to distinguish the vital dynamics (birth
and death process) of the predators and their survival due to the
consumption of preys. In the model the survival of predators will depend on
the status searching or handling. The handling predators are satisfied with
their consumption of preys and they don't need to find more preys to
survive. At the opposite the searching predators are unsatisfied with their
consumption of preys and they need to find some preys to survive. Once a
searching predator finds a prey (or enough preys) he becomes a handling and
after some time the handling predator becomes a searching predator again.

This process only influences the survival of predators which depends on
their ability to find a prey. In our model, a predator can reproduce at time
$t$ because he found enough prey to survive from its birth until the time $t$%
. In section \ref{Section2} we will first make some basic assumptions in
order for the predators to extinct in absence of preys. Then based on these
setting we will analyze the dynamical properties of the system \eqref{1.1}.
The main advantage with the model \eqref{1.1} is that we can separate the
vital dynamic and consumption of preys to describe the behavior of the
predators. This will be especially very convenient if we want to add an age
or size structure to the predator population. This kind of question is left
for future work.

In section \ref{Section6} we will see that our model is also comparable to
the standard predator prey model whenever $\rho=\dfrac{\chi}{\varepsilon}$
and $\gamma=\dfrac{1}{\varepsilon}$ for $\varepsilon>0$ small which means
that predators are going back and forth from handling to searching very
rapidly. In that case (as a singular limit) we obtain a convergence result
to the standard Rosenweig-MacArthur model \cite{Rosenzweig}
\begin{equation}  \label{1.2}
\left\lbrace
\begin{array}{l}
N^{\prime }=r N \left( 1- \dfrac{N}{K}\right)- P\, \dfrac{m N}{a+N}, \\
P^{\prime }=P\left( \dfrac{m N}{a+N}-d \right),%
\end{array}
\right.
\end{equation}
which is the most popular predator-prey system discussed in the literature.

Let us recall that the derivation of Holling type II functional response $%
\frac{m N}{a+N}$ can be found in Holling \cite{Holling59a, Holling59b} and
Hsu, Hubbell and Waltman \cite{Hsu-Hubbell-Waltman78a}. There are two
mathematical problems for the system \eqref{1.2}, namely, the global
asymptotic stability of the locally asymptotically stable interior
equilibrium (when it exists) and the uniqueness of the limit cycles when the
interior equilibrium is unstable. For the global asymptotic stability of
this equilibrium we may apply the Dulac's criterion Hsu, Hubbell and Waltman
\cite{Hsu-Hubbell-Waltman78b}, weak negative Bendixson Lemma Cheng, Hsu and
Lin \cite{Cheng-Hsu-Lin} or construction Lyapunov function Ardito and
Ricciardi \cite{Ardito-Ricciardi}. For uniqueness of limit cycle of
Rosenzweig-MacArthur model \eqref{1.2}, Cheng \cite{Cheng} employed the
symmetry of the prey isocline to prove the exponential asymptotic stability
of each limit cycle. Kuang and Freedman \cite{Kuang-Freedman} reduced %
\eqref{1.2} to a generalized Lienard equation which has the uniqueness of
limit cycle Zhang \cite{Zhang}.  We refer to Murray \cite{Murray}, Hastings \cite{Hastings}, Turchin \cite{Turchin} for more results about predator prey models. 

The plan of the paper is the following. In section \ref{Section2} we set
some basic assumptions in order for the predators to extinct in absence of
preys. In section \ref{Section3} we prove that the system is dissipative. In
section \ref{Section4} we study the uniform persistence and extinction
properties of the predators. We study the system in the interior region
which corresponds the region of co-existence of preys and predators in
section \ref{Section5}. We should mention that we can obtain a rather
complete description of the asymptotic behavior thanks to the fact the
system is competitive (for a new partial order). In section \ref{Section6}
we prove the convergence of our model to the Rosenweig-MacArthur model. In section \ref{Section7} we apply the model to the Canadian snowshoe hares and lynxes.

\section{Basic assumptions}

\label{Section2} In this section, we set some basic assumptions in order for
the predators to extinct in absence of preys. Consider the total number of
predators
\begin{equation*}
P=P_{H}+P_{S}.
\end{equation*}%
Then
\begin{equation}
P^{\prime }=\left( \beta _{P}-\mu _{P}-\eta \right) P_{S}+\left( \beta
_{P}-\mu _{P}\right) P_{H}.  \label{2.2}
\end{equation}%
The following assumptions mean that when $\frac{P_{S}}{P_{H}}>-\frac{\beta
_{P}-\mu _{P}}{\beta _{P}-\mu _{P}-\eta }$, the total population of
predators decreases. The total population of predators increases otherwise.

\begin{Assumption}
\label{ASS2.1} We assume that all the parameters of the model \eqref{1.1}
are strictly positive and
\begin{equation*}
\beta_{N}-\mu_{N}>0,\, \beta_{P}-\mu_{P}>0 \text{ and } \beta_P-\mu_P-\eta
<0.
\end{equation*}
\end{Assumption}

In absence of preys the dynamics of predator population is described by
\begin{equation*}
\left\lbrace
\begin{array}{l}
P_S^{\prime }=-(\mu_{P}+\eta)P_{S}+\gamma P_{H} \\
P_H^{\prime }=\beta_{P}P_S+\left(\beta_{P}-\mu_{P}-\gamma \right) P_{H}.%
\end{array}
\right.
\end{equation*}
Define
\begin{equation}  \label{2.3}
M= \left[
\begin{array}{cc}
-\mu_{P}-\eta & \gamma \\
\beta_{P} & \beta_{P}-\mu_{P}-\gamma%
\end{array}
\right].
\end{equation}
By using Assumption \ref{ASS2.1} we have
\begin{equation*}
\mathrm{tr}\left( M\right)= \left( \beta_{P}-\mu_{P}-\gamma \right)
-\left(\mu_{P}+\eta \right)<0.
\end{equation*}
Therefore in absence of preys the population of predators goes to extinct if
and only if
\begin{equation*}
\mathrm{det}\left( M\right)= - \left(\mu_{P}+\eta \right)\left(
\beta_{P}-\mu_{P}-\gamma \right) - \beta_{P} \gamma>0.
\end{equation*}
This last inequality can be equivalently reformulated in the following
assumption.

\begin{Assumption}[Extinction of the predators]
\label{ASS2.2} We assume that
\begin{equation}  \label{2.4}
\left( \beta_P-\mu_P-\gamma \right) <-\dfrac{\beta_P \gamma}{\mu_P+\eta}
\Leftrightarrow \left( \beta_P-\mu_P\right) <-\dfrac{\gamma}{\mu_P+\eta}
\left(\beta_P-\mu_P-\eta \right) .
\end{equation}
\end{Assumption}

\begin{remark}
\label{REM2.3} The first inequality in \eqref{2.4} implies that $\left(
\beta_P-\mu_P-\gamma \right)<0$. Moreover the second inequality in %
\eqref{2.4} and $\left( \beta_P-\mu_P\right)>0$ imply that $%
\left(\beta_P-\mu_P-\eta \right)<0$.
\end{remark}

\begin{lemma}
\label{LE2.4} Let Assumptions \ref{ASS2.1} and \ref{ASS2.1} be satisfied.
Then in absence of preys the population of predators goes to extinct.
\end{lemma}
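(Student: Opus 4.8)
The plan is to observe that ``absence of preys'' means $N\equiv 0$, so the full system \eqref{1.1} collapses to the planar linear system displayed just before \eqref{2.3}, whose coefficient matrix is exactly $M$ given in \eqref{2.3}. Since this is an autonomous linear system $x'=Mx$ with $x=(P_{S},P_{H})^{\top}$, its asymptotic behaviour is entirely determined by the spectrum of $M$, and it suffices to show that both eigenvalues of $M$ have strictly negative real part.

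First I would write down the characteristic polynomial $\lambda^{2}-\mathrm{tr}(M)\,\lambda+\mathrm{det}(M)$. The two facts needed to control its roots are already established in the text preceding the lemma: Assumption \ref{ASS2.1} gives $\mathrm{tr}(M)=(\beta_{P}-\mu_{P}-\gamma)-(\mu_{P}+\eta)<0$, while Assumption \ref{ASS2.2} was introduced precisely as the reformulation of $\mathrm{det}(M)=-(\mu_{P}+\eta)(\beta_{P}-\mu_{P}-\gamma)-\beta_{P}\gamma>0$. Thus I can quote both sign conditions directly, the consistency of the hypotheses being confirmed by Remark \ref{REM2.3}.

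Next I would invoke the standard planar Routh--Hurwitz criterion: for a $2\times 2$ real matrix, $\mathrm{tr}(M)<0$ together with $\mathrm{det}(M)>0$ forces both eigenvalues into the open left half-plane. Concretely, if the eigenvalues are real they share the sign pattern imposed by a positive product and a negative sum, hence both are negative; if they form a complex-conjugate pair, their common real part equals $\mathrm{tr}(M)/2<0$. In either case every solution of $x'=Mx$ decays exponentially, so $(P_{S}(t),P_{H}(t))\to(0,0)$ as $t\to+\infty$, which is the asserted extinction.

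I do not expect a genuine obstacle here, as all the arithmetic is already in place; the only point requiring a line of care is checking that the hypotheses deliver exactly the trace-negative / determinant-positive pair rather than some weaker condition, and this is immediate. If one preferred to avoid citing Routh--Hurwitz, an equally short route would be to exhibit a quadratic Lyapunov function $V(x)=x^{\top}Qx$ with $Q$ the solution of the Lyapunov equation $M^{\top}Q+QM=-I$, whose positive definiteness is guaranteed by the same two sign conditions; but the eigenvalue argument gives the cleaner write-up.
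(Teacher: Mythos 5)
Your proposal is correct and follows exactly the route the paper itself takes: the text immediately preceding the lemma establishes $\mathrm{tr}(M)<0$ from Assumption \ref{ASS2.1} and identifies Assumption \ref{ASS2.2} as the reformulation of $\mathrm{det}(M)>0$, so the lemma is the standard trace-negative/determinant-positive conclusion for the planar linear system $x'=Mx$. Nothing further is needed.
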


\section{Dissipativity}

\label{Section3} In this section, we will prove that the system \eqref{1.1}
is dissipative. We look for a positive left eigen-vector $(\widetilde{P}_S,%
\widetilde{P}_H) \in (0,+\infty)^2$ and an eigenvalue $\lambda>0$ such that
\begin{equation*}
(\widetilde{P}_S,\widetilde{P}_H) \left[
\begin{array}{cc}
-\mu_{P}-\eta & \gamma \\
\beta_{P} & \beta_{P}-\mu_{P}-\gamma%
\end{array}
\right]= -\lambda (\widetilde{P}_S,\widetilde{P}_H)
\end{equation*}
that is equivalent to
\begin{equation*}
\left\lbrace
\begin{array}{l}
-\left( \mu_{P}+\eta \right)\widetilde{P}_S+\beta_{P} \widetilde{P}%
_H=-\lambda \widetilde{P}_S \\
\gamma \widetilde{P}_S+ \left( \beta_{P}-\mu_{P}-\gamma \right) \widetilde{P}%
_H=-\lambda \widetilde{P}_H%
\end{array}
\right. \Leftrightarrow \left\lbrace
\begin{array}{l}
\beta_{P} \widetilde{P}_H=\left[ \left(\mu_{P}+\eta \right) -\lambda \right]%
\widetilde{P}_S \\
\gamma \widetilde{P}_S=\left[- \left(\beta_{P}-\mu_{P}-\gamma \right)
-\lambda \right] \widetilde{P}_H.%
\end{array}
\right.
\end{equation*}
Thus the sign $\widetilde{P}_S$ and $\widetilde{P}_H$ are the same if we
impose
\begin{equation*}
\lambda \in \left( 0, \min \left( \left(\mu_{P}+\eta \right), -
\left(\beta_{P}-\mu_{P}-\gamma \right) \right) \right)
\end{equation*}
and $\lambda$ must satisfy the following equation
\begin{equation*}
1=\dfrac{ \left[\left(\mu_{P}+\eta \right) -\lambda \right]}{\beta_{P}} \;
\dfrac{\left[ - \left(\beta_{P}-\mu_{P}-\gamma \right) -\lambda \right]}{%
\gamma} =:\Psi\left( \lambda \right).
\end{equation*}
The function $\lambda \to \Psi\left( \lambda \right)$ decreases between $0$
and $\min \left( \left(\mu_{P}+\eta \right), -
\left(\beta_{P}-\mu_{P}-\gamma \right) \right)$ and by using \eqref{2.4} we
have $\Psi(0)>1$. It follows that there exists a unique $\lambda^* \in
\left( 0, \min \left( \left(\mu_{P}+\eta \right), -
\left(\beta_{P}-\mu_{P}-\gamma \right) \right) \right)$ such that
\begin{equation}  \label{3.1}
1=\dfrac{ \left[\left(\mu_{P}+\eta \right) -\lambda^* \right]}{\beta_{P}} \;
\dfrac{\left[ - \left(\beta_{P}-\mu_{P}-\gamma \right) -\lambda^* \right]}{%
\gamma}.
\end{equation}
Note that
\begin{equation*}
\dfrac{\left[ - \left(\beta_{P}-\mu_{P}-\gamma \right) -\lambda^* \right]}{%
\gamma} <1 \Leftrightarrow - \left(\beta_{P}-\mu_{P} \right)<\lambda^*.
\end{equation*}%
By assumption $\left(\beta_{P}-\mu_{P} \right)>0$ it follows from %
\eqref{3.1} that
\begin{equation*}
\dfrac{ \left[\left(\mu_{P}+\eta \right) -\lambda^* \right]}{\beta_{P}}>1.
\end{equation*}
Since
\begin{equation*}
\gamma \widetilde{P}_S=\left[- \left(\beta_{P}-\mu_{P}-\gamma \right)
-\lambda^* \right] \widetilde{P}_H,
\end{equation*}
it follows that
\begin{equation}  \label{3.2}
\widetilde{P}_H>\widetilde{P}_S>0.
\end{equation}
By using $P_S$-equation and $P_H$-equation of system \eqref{1.1} we obtain
\begin{equation}  \label{3.3}
\widetilde{P}_S P_{S}^{\prime }+\widetilde{P}_H P_{H}^{\prime }=-\lambda^*%
\left[ \widetilde{P}_S P_S + \widetilde{P}_H P_H \right]- \left(\widetilde{P}%
_S-\widetilde{P}_H \right) P_{S} \, \rho \, \kappa\, N.
\end{equation}
By using the $N$-equation and comparison principle it is clear that we can
find some $N^*=\max\left(N_0, \left(\beta_{N}-\mu_{N} \right)/\delta \right)$
such that
\begin{equation*}
N(t) \leq N^*, \forall t \geq 0,
\end{equation*}
Then it follows that
\begin{equation*}
\rho \left(\widetilde{P}_H-\widetilde{P}_S \right) N^{\prime }+\widetilde{P}%
_S P_{S}^{\prime }+\widetilde{P}_H P_{H}^{\prime }\leq -\rho \left(%
\widetilde{P}_H-\widetilde{P}_S \right)\mu_{N}N -\lambda^* \left[ \widetilde{%
P}_S P_S + \widetilde{P}_H P_H \right] + \rho \left(\widetilde{P}_H-%
\widetilde{P}_S \right)\beta_{N}N^*
\end{equation*}
and the dissipativity follows.

Set
\begin{equation*}
M=\dfrac{\rho \left(\widetilde{P}_H-\widetilde{P}_S \right)\beta_N N^\ast }{%
\min \left( \mu_N, \lambda^\ast\right)}>0.
\end{equation*}
As a consequence of the last inequality, we obtain the following results.

\begin{proposition}
Let Assumptions \ref{ASS2.1} and \ref{ASS2.2} be satisfied. The system %
\eqref{1.1} generates a unique continuous semiflow $\left\lbrace U(t)
\right\rbrace_{t \geq 0}$ on $[0,\infty)^3$. Moreover the domain
\begin{equation*}
D=\left\lbrace \left(N, P_S, P_H \right) \in [0,\infty)^3: \rho \left(%
\widetilde{P}_H-\widetilde{P}_S \right) N+ \widetilde{P}_S P_S+\widetilde{P}
_H P_H \leq M \right\rbrace
\end{equation*}
is positively invariant by the semiflow generated by $U$. That is to say
that
\begin{equation*}
U(t)D \subset D , \forall t \geq 0.
\end{equation*}
Furthermore $D$ attracts every point of $[0,\infty)^3$ for $U$. That is to
say that
\begin{equation*}
\lim_{t \to \infty} \delta(U(t)x,D)=0, \forall x \in [0,\infty)^3,
\end{equation*}
where $\delta(x,D):=\inf_{y \in D} \Vert x -y \Vert$ is the Hausdorff's
semi-distance. As a consequence the semiflow of $U$ has a compact global
attractor $\mathcal{A} \subset [0,\infty)^3$.
\end{proposition}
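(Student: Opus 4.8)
The plan is to build everything on the Lyapunov-type functional already introduced in the discussion preceding the statement, namely
\begin{equation*}
V(N,P_S,P_H):=\rho\left(\widetilde{P}_H-\widetilde{P}_S\right)N+\widetilde{P}_S P_S+\widetilde{P}_H P_H.
\end{equation*}
By \eqref{3.2} and $\rho>0$ all three coefficients are strictly positive, so $V$ is a coercive positive-definite linear functional on $[0,\infty)^3$ whose sublevel sets are compact, and $D$ is precisely the set $\{V\le M\}$. First I would settle the well-posedness claim. The right-hand side of \eqref{1.1} is polynomial, hence locally Lipschitz, so Picard--Lindel\"of gives a unique maximal solution through each point of $[0,\infty)^3$. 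Positivity follows by inspecting the boundary faces: on $\{N=0\}$ one has $N'=0$; on $\{P_S=0\}$ one has $P_S'=\gamma P_H\ge 0$; and on $\{P_H=0\}$ one has $P_H'=\left(\rho\kappa N+\beta_P\right)P_S\ge 0$. Thus $[0,\infty)^3$ is positively invariant, and the a priori bound obtained below rules out finite-time blow-up, so solutions are global and define the semiflow $\left\lbrace U(t)\right\rbrace_{t\ge 0}$.

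The core step is to turn the last displayed differential inequality of Section~\ref{Section3} into a scalar Gronwall estimate. Since $\mu_N\ge\min(\mu_N,\lambda^*)$ and $\lambda^*\ge\min(\mu_N,\lambda^*)$, one has
\begin{equation*}
\rho\left(\widetilde{P}_H-\widetilde{P}_S\right)\mu_N N+\lambda^*\left[\widetilde{P}_S P_S+\widetilde{P}_H P_H\right]\ge\min\left(\mu_N,\lambda^*\right)V,
\end{equation*}
so along trajectories, writing $\alpha:=\min(\mu_N,\lambda^*)>0$,
\begin{equation*}
\frac{d}{dt}\,V\left(U(t)x\right)\le-\alpha\,V\left(U(t)x\right)+\rho\left(\widetilde{P}_H-\widetilde{P}_S\right)\beta_N N^*.
\end{equation*}
Integrating this linear inequality by the comparison principle gives
\begin{equation*}
V\left(U(t)x\right)\le V(x)\,e^{-\alpha t}+M\left(1-e^{-\alpha t}\right),\qquad\forall t\ge 0,
\end{equation*}
with $M$ exactly the constant defined in the text. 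This single estimate yields all remaining conclusions at once.

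From here the three geometric statements follow. If $x\in D$ then $V(x)\le M$ and the estimate gives $V(U(t)x)\le M$, i.e.\ $U(t)x\in D$, which is positive invariance. For arbitrary $x\in[0,\infty)^3$ the estimate gives $\limsup_{t\to\infty}V(U(t)x)\le M$; because $V$ is coercive and $D=\{V\le M\}$, this is equivalent to $\delta(U(t)x,D)\to 0$, the asserted attraction. Finally, the same estimate shows that $U$ is point dissipative and that forward orbits of bounded sets are bounded; in finite dimension bounded sets are precompact, so the semiflow is asymptotically smooth, and the standard theory of dissipative dynamical systems then furnishes a compact global attractor $\mathcal{A}\subset D\subset[0,\infty)^3$.

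I expect almost no serious obstacle here: the genuine content, namely the existence of the eigenvalue $\lambda^*$ and the construction of $V$ with $\widetilde{P}_H>\widetilde{P}_S$, is already carried out before the statement. The only points needing a little care are the positivity and global-existence argument of the first step and the precise invocation of the attractor theorem in the last step; the heart of the proof is simply the scalar comparison argument on $V$.
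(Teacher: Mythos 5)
Your proof is correct and follows essentially the same route as the paper: the paper's own argument for this proposition is precisely the differential inequality for the linear functional $\rho(\widetilde{P}_H-\widetilde{P}_S)N+\widetilde{P}_S P_S+\widetilde{P}_H P_H$ derived just before the statement, and you simply make explicit the Gronwall/comparison step, the positivity and global-existence argument, and the final appeal to the standard theory of dissipative systems, all of which the paper leaves implicit with ``the dissipativity follows.'' The one point worth noting (inherited from the paper rather than introduced by you) is that the bound $N(t)\le N^*$ with $N^*=\max\left(N_0,(\beta_N-\mu_N)/\delta\right)$ depends on the initial datum, so for a fixed absorbing set $D$ one should either take $N^*=(\beta_N-\mu_N)/\delta$ and run the estimate after a transient, or instead bound $(\beta_N-\mu_N)N-\delta N^2\le-\mu_N N+\beta_N^2/(4\delta)$ to obtain an $x$-independent constant.
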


\section{Uniform persitence and extinction of predators}

In this section, we study the uniform persistence and extinction of the
predators. Firstly we consider the existence of the equilibrium. \label%
{Section4} The equilibrium $\left(\overline{N},\overline{P}_{S}, \overline{P}%
_{H} \right) \in [0, \infty)^3$ satisfies the following system
\begin{equation*}
\left\{
\begin{array}{l}
0=\overline{N} \left[ \beta _{N}-\mu _{N}-\delta \overline{N}-\,\kappa \,%
\overline{P}_{S}\right] , \\
0=-(\mu _{P}+\eta )\overline{P}_{S}-\overline{P}_{S}\,\rho \,\kappa \,%
\overline{N}+\gamma \overline{P}_{H}, \\
0=-\mu _{P}\overline{P}_{H}+\overline{P}_{S}\,\rho \,\kappa \,\overline{N}%
-\gamma \overline{P}_{H}+\beta _{P}\left( \overline{P}_{S}+\overline{P}%
_{H}\right).%
\end{array}%
\right.
\end{equation*}
By using Assumptions \ref{ASS2.1} and \ref{ASS2.2}, we deduce that the only
equilibrium satisfying $\overline{N}=0$ is $E_{1}=(0,0,0)$. If we assume
next that $\overline{N}>0$, we obtain the system
\begin{equation*}
\left\{
\begin{array}{l}
0= \beta _{N}-\mu _{N}-\delta \overline{N}-\,\kappa \,\overline{P}_{S} , \\
0=-(\mu _{P}+\eta )\overline{P}_{S}-\overline{P}_{S}\,\rho \,\kappa \,%
\overline{N}+\gamma \overline{P}_{H}, \\
0=-\mu _{P}\overline{P}_{H}+\overline{P}_{S}\,\rho \,\kappa \,\overline{N}%
-\gamma \overline{P}_{H}+\beta _{P}\left( \overline{P}_{S}+\overline{P}%
_{H}\right).%
\end{array}%
\right.
\end{equation*}
From the first equation we have
\begin{equation*}
\overline{N}=\widehat{N}-\frac{\,\kappa \,}{\delta }\overline{P}_{S}
\end{equation*}%
with
\begin{equation*}
\widehat{N}=\dfrac{\beta_N-\mu_N}{\delta}.
\end{equation*}
By adding the last two equations, we have
\begin{equation*}
\overline{P}_{H}=\frac{(\mu _{P}+\eta -\beta _{P})}{\left( \beta _{P}-\mu
_{P}\right) }\overline{P}_{S}.
\end{equation*}%
Combining the above two equations with

\begin{equation*}
-(\mu _{P}+\eta )\overline{P}_{S}-\overline{P}_{S}\,\rho \,\kappa \,%
\overline{N}+\gamma \overline{P}_{H}=0,
\end{equation*}%
we have
\begin{equation*}
\left( -(\mu _{P}+\eta )-\,\frac{\left( \beta _{N}-\mu _{N}\right) \rho
\,\kappa }{\delta }+\gamma \frac{(\mu _{P}+\eta -\beta _{P})}{\left( \beta
_{P}-\mu _{P}\right) }\right) \overline{P}_{S}\,+\frac{\,\kappa^{2} \,\rho
\, }{\delta } \overline{P}_{S}^{2}\,=0
\end{equation*}%
and then%
\begin{equation*}
\overline{P}_{S}=0\text{ or }\overline{P}_{S}=\left( (\mu _{P}+\eta )\,-%
\frac{\gamma (\mu _{P}+\eta -\beta _{P})}{\left( \beta _{P}-\mu _{P}\right) }%
\right) \frac{\,\delta }{\kappa ^{2}\,\rho \,}+\,\frac{\left( \beta _{N}-\mu
_{N}\right) }{\kappa }.
\end{equation*}%
Thus we get the following lemma.

\begin{lemma}
\label{LE4.1} Let Assumptions \ref{ASS2.1} and \ref{ASS2.2} be satisfied.
System \eqref{1.1} always has the following two boundary equilibria
\begin{equation*}
E_{1}=(0,0,0),\quad E_{2}=\left( \widehat{N} ,0,0\right).
\end{equation*}
Moreover there exists a unique interior equilibrium $\quad E^{\ast }=\left( N^{\ast
},P_{S}^{\ast },P_{H}^{\ast }\right)$ if and only if
\begin{equation}  \label{4.1}
\left( \beta _{N}-\mu _{N}\right) \left( \beta _{P}-\mu _{P}\right) \kappa
\,\rho+\delta \, \left( \beta _{P}-\mu _{P}\right) \left( \mu _{P}+\eta
\right) >-\delta \gamma (\beta _{P}-\mu _{P}-\eta ).
\end{equation}
Furthermore, we have
\begin{equation*}
\begin{array}{ll}
N^{\ast } & =\dfrac{-\,\left( \beta _{P}-\mu _{P}\right) \left( \mu
_{P}+\eta \right) -\gamma (\beta _{P}-\mu _{P}-\eta )}{\left( \beta _{P}-\mu
_{P}\right) \kappa \,\rho \,}>0, \\
P_{S}^{\ast } & =\dfrac{\delta \,\left( \beta _{P}-\mu _{P}\right) \left(
\mu _{P}+\eta \right) +\delta \gamma (\beta _{P}-\mu _{P}-\eta )\,+\left(
\beta _{N}-\mu _{N}\right) \left( \beta _{P}-\mu _{P}\right) \kappa \,\rho \,%
}{\left( \beta _{P}-\mu _{P}\right) \kappa ^{2}\,\rho \,}>0, \\
P_{H}^{\ast } & =-\dfrac{\left(\beta _{P}-\mu _{P}-\eta \right)}{\beta
_{P}-\mu _{P}} P_{S}^{\ast }>0.%
\end{array}%
\end{equation*}
\end{lemma}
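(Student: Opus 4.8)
The plan is to enumerate all nonnegative equilibria by a case split on $\overline{N}$, and then to determine exactly which parameter regime makes the resulting interior candidate strictly positive in every coordinate. The algebraic reductions needed (the relation $\overline{N}=\widehat{N}-(\kappa/\delta)\overline{P}_S$, the proportionality $\overline{P}_H=\frac{\mu_P+\eta-\beta_P}{\beta_P-\mu_P}\overline{P}_S$ obtained by adding the last two equilibrium equations, and the ensuing quadratic in $\overline{P}_S$) are precisely the computations carried out just above the statement, so I would quote them and concentrate on the existence and positivity analysis.

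First I would treat the boundary case $\overline{N}=0$. Then the first equilibrium equation holds automatically and the remaining two become $M\,(\overline{P}_S,\overline{P}_H)^{\top}=0$, where $M$ is the matrix in \eqref{2.3}. Since Assumption \ref{ASS2.2} is, by the discussion in Section \ref{Section2}, equivalent to $\det M>0$, the matrix $M$ is invertible and the only solution is $\overline{P}_S=\overline{P}_H=0$. Hence $E_1=(0,0,0)$ is the unique equilibrium on $\{\overline{N}=0\}$. For the case $\overline{N}>0$ the first equation forces the bracket to vanish, and the quadratic factors with a root $\overline{P}_S=0$ (yielding $E_2=(\widehat{N},0,0)$, which lies in $[0,\infty)^3$ because $\beta_N-\mu_N>0$) together with a single nonzero root, which gives the explicit $P_S^{\ast}$. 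Because this nonzero root is unique, any interior equilibrium is automatically unique, which already settles existence of the two boundary equilibria and the uniqueness clause.

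The substance of the statement is then the equivalence between the interior equilibrium being genuinely positive and condition \eqref{4.1}, which I would verify coordinate by coordinate. For $P_H^{\ast}$ the proportionality factor $-\tfrac{\beta_P-\mu_P-\eta}{\beta_P-\mu_P}$ is positive, since $\beta_P-\mu_P>0$ by Assumption \ref{ASS2.1} and $\beta_P-\mu_P-\eta<0$ by Remark \ref{REM2.3}; hence $P_H^{\ast}>0$ if and only if $P_S^{\ast}>0$. For $P_S^{\ast}$, clearing the positive denominator $(\beta_P-\mu_P)\kappa^2\rho$ in its closed form turns $P_S^{\ast}>0$ into exactly inequality \eqref{4.1}. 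Finally, substituting $P_S^{\ast}$ back into $\overline{N}=\widehat{N}-(\kappa/\delta)P_S^{\ast}$ produces the stated formula for $N^{\ast}$, whose numerator equals $-(\beta_P-\mu_P)(\mu_P+\eta)-\gamma(\beta_P-\mu_P-\eta)$.

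The only real subtlety — which I would flag as the key point rather than a genuine obstacle — is that $N^{\ast}>0$ is not an extra requirement but is forced by the extinction hypothesis: the second form of \eqref{2.4}, multiplied by $(\mu_P+\eta)>0$, reads $(\beta_P-\mu_P)(\mu_P+\eta)<-\gamma(\beta_P-\mu_P-\eta)$, which is precisely positivity of the numerator of $N^{\ast}$. Consequently the simultaneous strict positivity of all three coordinates collapses to the single condition $P_S^{\ast}>0$, i.e.\ \eqref{4.1}, which delivers the claimed ``if and only if'' and, combined with the uniqueness of the nonzero root above, the uniqueness of $E^{\ast}$. Everything beyond this is the linear and quadratic bookkeeping already displayed before the statement.
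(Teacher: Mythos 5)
Your proposal is correct and follows essentially the same route as the paper, which derives the lemma from exactly the algebraic reductions you cite (the case split on $\overline{N}$, the relation $\overline{P}_H=\frac{\mu_P+\eta-\beta_P}{\beta_P-\mu_P}\overline{P}_S$, and the factored quadratic in $\overline{P}_S$). You additionally make explicit two points the paper leaves implicit --- that $\det M>0$ forces $E_1$ to be the only equilibrium with $\overline{N}=0$, and that $N^{\ast}>0$ is automatic from the second form of \eqref{2.4} so that interiority reduces to $P_S^{\ast}>0$, i.e.\ \eqref{4.1} --- which is a welcome clarification rather than a deviation.
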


%

\subsection{Stability of the equilibrium $E_{1}$}

The Jacobian matrix at the equilibrium $E_{1}$ is
\begin{equation*}
\left[
\begin{array}{ccc}
\beta _{N}-\mu _{N} & 0 & 0 \\
0 & -\mu _{P}-\eta & \gamma \\
0 & \beta _{P} & \beta _{P}-\mu _{P}-\gamma%
\end{array}%
\right]
\end{equation*}
and the characteristic equation is
\begin{equation*}
\left[ \left( \lambda +\mu _{P}+\eta \right) \left( \lambda -\left( \beta
_{P}-\mu _{P}-\gamma \right) \right) -\beta _{P}\gamma \right] \left[
\lambda -\left( \beta _{N}-\mu _{N}\right) \right] =0.
\end{equation*}
So one of the eigenvalues is $\lambda _{1,E_{1}}=\beta _{N}-\mu _{N}>0.$
Thus we can get that the equilibrium $E_{1}$ is unstable. The rest of the
spectrum coincides with the spectrum of the matrix $M$ defined in \eqref{2.3}%
. Thus we obtain the following lemma.

\begin{lemma}
\label{LE4.2} Let Assumptions \ref{ASS2.1} and \ref{ASS2.2} be satisfied.
The equilibrium $E_{1}$ is hyperbolic and the unstable space is one
dimensional.
\end{lemma}

\subsection{Stability of the equilibrium $E_2$}

The Jacobian matrix at the equilibrium $E_{2}$ is
\begin{equation*}
\left[
\begin{array}{ccc}
-\left( \beta _{N}-\mu _{N}\right) & -\kappa \widehat{N} & 0 \\
0 & -\left( (\mu _{P}+\eta )+\rho \kappa \widehat{N} \right) & \gamma \\
0 & \rho \,\kappa \,\widehat{N}+\beta _{P} & \beta _{P}-\mu _{P}-\gamma%
\end{array}
\right]
\end{equation*}%
and the characteristic equation is
\begin{equation*}
\left[ \left( \lambda + \mu _{P}+\eta +\rho \kappa \widehat{N} \right)
\left( \lambda -(\beta _{P}-\mu _{P}-\gamma )\right) -\gamma \left( \rho
\kappa \widehat{N}+\beta _{P}\right) \right] \left[ \lambda +\left( \beta
_{N}-\mu _{N}\right) \right] =0.
\end{equation*}%
So one of the eigenvalues is $\lambda _{1,E_{2}}=-\left( \beta _{N}-\mu
_{N}\right) <0$ and the remaining part of the characteristic equation is
\begin{equation*}
\lambda^2 +a \lambda +b=0
\end{equation*}
with
\begin{equation*}
a=\left( \mu _{P}+\eta +\,\rho \,\kappa \,\widehat{N}\right) -(\beta
_{P}-\mu _{P}-\gamma )
\end{equation*}
and
\begin{eqnarray*}
b &=&\left( \mu _{P}+\gamma -\beta _{P}\right)\left( \mu _{P}+\eta +\,\rho
\,\kappa \,\widehat{N}\right) -\gamma \left( \rho \,\kappa \,\widehat{N}%
+\beta _{P}\right) .
\end{eqnarray*}
By using Assumptions \ref{ASS2.1} and \ref{ASS2.2} we have $a>0$. Moreover
by using the Routh-Hurwitz criterion $E_2$ is stable if and only if $b>0$
which corresponds to
\begin{equation*}
\begin{array}{l}
\left( \mu _{P}+\gamma -\beta _{P}\right) \left( \mu _{P}+\eta +\rho \kappa
\widehat{N}\right) -\gamma \left( \rho \,\kappa \,\widehat{N}+\beta
_{P}\right) >0 \\
\Leftrightarrow \left( \beta _{N}-\mu _{N}\right) \left( \beta _{P}-\mu
_{P}\right) \kappa \,\rho+\delta \, \left( \beta _{P}-\mu _{P}\right) \left(
\mu _{P}+\eta \right) <-\delta \gamma (\beta _{P}-\mu _{P}-\eta )%
\end{array}%
\end{equation*}
Now we obtain the following result.

\begin{lemma}
\label{LE4.3} Let Assumptions \ref{ASS2.1} and \ref{ASS2.2} be satisfied. $%
E_2$ is unstable if the interior equilibrium exits (i.e. the condition \ref%
{4.1} is satisfied) and the unstable space is one dimensional and the stable
space is two dimensional.
\end{lemma}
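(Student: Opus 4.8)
The plan is to read the three eigenvalues of the Jacobian at $E_2$ directly off the factored characteristic equation already displayed, and to determine their signs under condition \eqref{4.1}. The first factor contributes the eigenvalue $\lambda_{1,E_2}=-(\beta_N-\mu_N)$, which is strictly negative by Assumption \ref{ASS2.1}; it therefore supplies one direction of the stable subspace, and the entire problem reduces to locating the two roots of the quadratic factor $\lambda^2+a\lambda+b=0$.

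The key observation is that the preceding computation has already established $a>0$ together with the equivalence
\[
b>0 \;\Leftrightarrow\; (\beta_N-\mu_N)(\beta_P-\mu_P)\kappa\rho+\delta(\beta_P-\mu_P)(\mu_P+\eta)<-\delta\gamma(\beta_P-\mu_P-\eta).
\]
The inequality on the right is exactly the negation of condition \eqref{4.1}. Hence, assuming the interior equilibrium exists so that \eqref{4.1} holds (with strict inequality), we immediately obtain $b<0$.

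With $a>0$ and $b<0$ in hand, I would apply Vieta's formulas to the quadratic: the product of its roots equals $b<0$, so the two roots are real (the discriminant $a^2-4b$ exceeds $a^2>0$) and have opposite signs. The quadratic factor thus contributes exactly one positive and one negative eigenvalue. Combining this with $\lambda_{1,E_2}<0$, the spectrum of the Jacobian at $E_2$ consists of two strictly negative eigenvalues and one strictly positive eigenvalue, all nonzero. Consequently $E_2$ is hyperbolic, its unstable subspace is one-dimensional (spanned by the eigenvector of the positive root), and its stable subspace is two-dimensional, which is precisely the claim.

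I do not expect a genuine obstacle: every ingredient — the factored characteristic polynomial, the sign of $a$, and the sign equivalence for $b$ — has been supplied in the computation preceding the lemma, so the argument reduces to a short sign-chasing exercise through the Routh–Hurwitz/Vieta bookkeeping. The only point requiring a moment of care is verifying that the relevant inequality for $b$ is the strict negation of \eqref{4.1} (rather than its closure including equality), which is exactly the regime in which the interior equilibrium is asserted to exist.
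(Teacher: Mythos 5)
Your proposal is correct and follows essentially the same route as the paper: the paper's own argument is the computation preceding the lemma (the factored characteristic polynomial, $a>0$, and the equivalence of $b>0$ with the negation of condition \eqref{4.1}), and your Vieta/sign analysis showing one positive and two negative real eigenvalues is exactly the intended conclusion. No gaps.
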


\subsection{Extinction of the predators and the global stability of $E_2$}

We decompose the positive cone $M=\mathbb{R}^3_+$ into the interior region
\begin{equation*}
\overset{\circ}{M}= \left\lbrace \left(N,P_S,P_H\right) \in M: N>0 \text{
and } P_S+P_H >0 \right\rbrace,
\end{equation*}
the boundary region with predators only
\begin{equation}  \label{4.2}
\partial M_P:= \left\lbrace \left(N,P_S,P_H\right) \in M: N =0 \right\rbrace,
\end{equation}
and the boundary region with prey only
\begin{equation}  \label{4.3}
\partial M_N:= \left\lbrace \left(N,P_S,P_H\right) \in M: P_S+P_H =0
\right\rbrace.
\end{equation}
Each sub domain $\overset{\circ}{M}$, $\partial M_P$ and $\partial M_N$ is
positively invariant by the semiflow generated by \eqref{1.1}.

\begin{theorem}
\label{TH4.4} Let Assumptions \ref{ASS2.1} and \ref{ASS2.2} be satisfied.
Assume that $E_2$ is locally asymptotically stable (i.e. $\left(\mu
_{P}+\eta+\rho \kappa \widehat{N} \right) \left(\mu _{P}+ \gamma- \beta_P
\right) > \gamma \left( \beta_P+\rho \kappa \widehat{N} \right)$). Then the
predator goes to extinction. More precisely for each initial value in $%
M=(N(0),P_S(0),P_H(0)) \in [0,\infty)^3$,
\begin{equation*}
\lim_{t \to \infty} P_S(t)+P_H(t)=0.
\end{equation*}
and
\begin{equation*}
\lim_{t \to \infty} N(t)= \left\lbrace
\begin{array}{ll}
\widehat{N}, & \text{ if } N(0)>0, \\
0, & \text{ if } N(0)=0.%
\end{array}
\right.
\end{equation*}
\end{theorem}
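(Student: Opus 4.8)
The plan is to first show that the predators are driven to extinction, and then to deduce the stated limit for $N$ from the decoupled prey equation. Since $P_S \geq 0$, the $N$-equation of \eqref{1.1} gives $N' \leq N(\beta_N - \mu_N - \delta N)$, so by comparison with the logistic equation $\limsup_{t \to \infty} N(t) \leq \widehat{N}$; in particular, for every $\varepsilon > 0$ there is $T \geq 0$ with $N(t) \leq \widehat{N} + \varepsilon$ for all $t \geq T$. (If $N(0) = 0$ then $N(t) \equiv 0$ by invariance of $\partial M_P$.) The heart of the argument is therefore to produce a weighted functional of $(P_S, P_H)$ that decays along the flow.

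To this end I would use the predator block of the Jacobian at $E_2$,
\begin{equation*}
M(\widehat{N}) = \left[
\begin{array}{cc}
-(\mu_P + \eta + \rho\kappa\widehat{N}) & \gamma \\
\beta_P + \rho\kappa\widehat{N} & \beta_P - \mu_P - \gamma
\end{array}
\right],
\end{equation*}
whose determinant equals the quantity $b$ appearing in the stability analysis of $E_2$ and whose trace is negative under Assumptions \ref{ASS2.1}--\ref{ASS2.2}. The local stability hypothesis on $E_2$ is exactly $\det M(\widehat{N}) > 0$, so both eigenvalues of $M(\widehat{N})$ have negative real part. Since $M(\widehat{N})$ is an irreducible quasi-positive (Metzler) matrix, its spectral abscissa is a real eigenvalue $\lambda_{\max} < 0$ carrying a positive left eigenvector $(q_S, q_H) \in (0,\infty)^2$, and the standard Perron--Frobenius estimate gives $\lambda_{\max} > \beta_P - \mu_P - \gamma$. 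Reading off the second eigenvector relation $\gamma q_S = (\lambda_{\max} - (\beta_P - \mu_P - \gamma)) q_H$ and using $\lambda_{\max} < 0 < \beta_P - \mu_P$ (Assumption \ref{ASS2.1}), I would verify the key sign inequality $q_H > q_S > 0$, which is what ultimately makes the perturbation term below favorable.

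Next I would set $V = q_S P_S + q_H P_H$ and differentiate along \eqref{1.1}. Writing the searching/handling subsystem in vector form with matrix $M(N)$ obtained from $M(\widehat{N})$ by replacing $\widehat{N}$ with $N$, and using $(q_S, q_H) M(\widehat{N}) = \lambda_{\max} (q_S, q_H)$, a short computation gives
\begin{equation*}
V' = \lambda_{\max} V + \rho\kappa(N - \widehat{N})(q_H - q_S) P_S.
\end{equation*}
For $t \geq T$ we have $N - \widehat{N} \leq \varepsilon$ and $q_H - q_S > 0$, while $P_S \leq V/q_S$; hence $V' \leq \big( \lambda_{\max} + \rho\kappa\varepsilon(q_H - q_S)/q_S \big) V$. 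Choosing $\varepsilon$ small enough that the bracket is negative yields $V(t) \to 0$ exponentially, and since $(q_S, q_H) > 0$ this forces $P_S(t) + P_H(t) \to 0$.

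Finally, with $P_S(t) \to 0$ the prey equation becomes asymptotically autonomous logistic: given $\varepsilon > 0$, eventually $N(\beta_N - \mu_N - \varepsilon - \delta N) \leq N' \leq N(\beta_N - \mu_N - \delta N)$, so comparison yields $\liminf_t N \geq (\beta_N - \mu_N - \varepsilon)/\delta$ together with $\limsup_t N \leq \widehat{N}$; letting $\varepsilon \downarrow 0$ gives $N(t) \to \widehat{N}$ when $N(0) > 0$, while $N(t) \equiv 0$ when $N(0) = 0$. The main obstacle I anticipate is not the prey asymptotics but the coupling in the predator subsystem: because $N$ enters $P_S'$ and $P_H'$ with opposite signs, the subsystem is not monotone in $N$ and one cannot simply freeze $N = \widehat{N}$. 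The device that resolves this is precisely the weighted functional $V$ together with the sign information $q_H > q_S$, which turns the $N$-dependent remainder into a term controllable by $\limsup_t N(t) \leq \widehat{N}$.
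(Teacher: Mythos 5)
Your argument is correct, but it is organized quite differently from the paper's. The paper proves extinction in one shot on $\overset{\circ}{M}\cup \partial M_N\setminus\{E_1\}$ with the Lyapunov function $V(N,P_S,P_H)=\int_{\widehat N}^{N}\frac{\xi-\widehat N}{\xi}\,d\xi+c_1P_S+c_2P_H$: the logarithmic prey term contributes $(N-\widehat N)(-\kappa P_S)$, which under the constraint $c_2=c_1+\tfrac{1}{\rho}$ \emph{exactly cancels} the $N$-dependent cross terms from the predator equations, leaving $-\delta(N-\widehat N)^2$ plus the row vector $(c_1,c_2)$ applied to the predator block frozen at $\widehat N$; the existence of positive $c_1,c_2$ on that line making both resulting coefficients negative is shown (via the slopes of the lines $L_1,L_2$) to be exactly the stated stability condition, and LaSalle's invariance principle concludes. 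You instead decouple the two populations: logistic comparison gives $\limsup_t N\le\widehat N$, then the purely linear functional $q_SP_S+q_HP_H$ built from the left Perron eigenvector of the same predator block $M(\widehat N)$ handles the predators, with the cross term $\rho\kappa(N-\widehat N)(q_H-q_S)P_S$ \emph{bounded asymptotically} (using $q_H>q_S$ and $N\le\widehat N+\varepsilon$) rather than cancelled, and an asymptotically autonomous comparison finishes the prey. The underlying algebra is identical --- the hypothesis is precisely $\det M(\widehat N)>0$, which is what makes the paper's two inequalities \eqref{4.8} simultaneously feasible and what makes your $\lambda_{\max}$ negative, and your ratio $q_S/q_H=(\lambda_{\max}-(\beta_P-\mu_P-\gamma))/\gamma<1$ plays the role of the paper's slope comparison --- but your route buys an explicit exponential decay rate for $P_S+P_H$ and avoids both LaSalle and the coercivity of the logarithmic term near $N=0$, at the price of a two-stage fluctuation argument for $N$ that the paper absorbs into a single Lyapunov computation. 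All the individual steps you rely on (spectral abscissa of an irreducible Metzler matrix exceeding its diagonal entries, positivity of the left eigenvector, $P_S\le V/q_S$) check out, so I see no gap.
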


\begin{proof}
The boundary region with predator only $\partial M_P$ is positively
invariant by the semiflow generated by \eqref{1.1} and by Assumption \ref%
{ASS2.2} any solution starting from $\partial M_P$ exponentially converges
to $E_{1}$.

So it remains to investigate the limit of a solution starting from $\overset{%
\circ}{M} \cup \partial M_N \backslash \{E_{1}\}$. We consider the Liapunov
function
\begin{equation}  \label{4.4}
V(N,P_S,P_H)= \int_{\widehat{N}}^N \dfrac{\xi-\widehat{N}}{\xi} d\xi+c_1
P_S+ c_2 P_H
\end{equation}
where $c_1>0$ and $c_2>0$ to be determined. We have
\begin{equation}
\begin{array}{l}
\dot{V}= \left( N- \widehat{N}\right) \left(\beta_{N}-\mu_{N}-\delta N-
\kappa P_{S} \right) \\
+c_1 \left(-(\mu _{P}+\eta )P_{S}-\rho \kappa P_{S} N+\gamma P_{H}\right) \\
+c_2 \left( -\mu _{P}P_{H}+\rho \kappa P_{S}N-\gamma P_{H}+\beta_{P}\left(
P_{S}+P_{H} \right) \right) \\
=\left( N- \widehat{N}\right) \left(-\delta \left( N-\widehat{N} \right)-
\kappa P_{S} \right) \\
+c_1 \left(-(\mu _{P}+\eta )P_{S}-\rho \kappa P_{S} \left( N -\widehat{N}
\right)-\rho \kappa P_{S}\widehat{N} +\gamma P_{H}\right) \\
+c_2 \left( -\mu _{P}P_{H}+\rho \kappa P_{S} \left( N -\widehat{N}
\right)+\rho \kappa P_{S}\widehat{N} -\gamma P_{H}+\beta_{P}\left(
P_{S}+P_{H} \right) \right).%
\end{array}%
\end{equation}
Thus we obtain
\begin{equation}
\begin{array}{l}
\dot{V}= -\delta\left( N- \widehat{N}\right)^2+ \kappa P_{S}\left( N-
\widehat{N}\right) \left(-1-c_1\rho+c_2 \rho\right) \\
+P_S \left(- c_1 (\mu _{P}+\eta )- c_1 \rho \kappa \widehat{N} + c_2 \rho
\kappa \widehat{N}+c_2 \beta_P \right) \\
+P_H \left(- c_2 \mu _{P}+ c_1 \gamma - c_2 \gamma + c_2 \beta_P \right).%
\end{array}%
\end{equation}
We claim that we can choose $c_1>0$ and $c_2>0$ such that $c_2=c_1+\dfrac{1}{%
\rho}$ and the following inequalities are satisfied
\begin{equation}  \label{4.8}
- c_1 (\mu _{P}+\eta )- c_1 \rho \kappa \widehat{N} + c_2 \rho \kappa
\widehat{N}+c_2 \beta_P <0 \text{ and } - c_2 \mu _{P}+ c_1 \gamma - c_2
\gamma + c_2 \beta_P<0.
\end{equation}
In fact the inequalities in \eqref{4.8} lead to consider the lines
\begin{equation*}
c_2=c_1 \dfrac{\gamma}{\mu _{P}+ \gamma- \beta_P} \,\, (L_1)
\end{equation*}
and
\begin{equation*}
c_2=c_1 \dfrac{(\mu _{P}+\eta )+\rho \kappa \widehat{N}}{\beta_P+\rho \kappa
\widehat{N}} \,\,(L_2).
\end{equation*}
By Assumption \ref{ASS2.2} (see Remark \ref{REM2.3}) we have $\mu _{P}+
\gamma- \beta_P>0$ and by Assumption \ref{ASS2.1} we have $\mu _{P}+
\eta>\beta_P$ and then
\begin{equation*}
\dfrac{(\mu _{P}+\eta )+\rho \kappa \widehat{N}}{\beta_P+\rho \kappa
\widehat{N}}>1.
\end{equation*}
Note that
\begin{equation*}
\begin{array}{l}
\dfrac{(\mu _{P}+\eta )+\rho \kappa \widehat{N}}{\beta_P+\rho \kappa
\widehat{N}}> \dfrac{\gamma}{\mu _{P}+ \gamma- \beta_P} \\
\Leftrightarrow \left(\mu _{P}+\eta+\rho \kappa \widehat{N} \right)
\left(\mu _{P}+ \gamma- \beta_P \right) > \gamma \left( \beta_P+\rho \kappa
\widehat{N} \right) \\
\end{array}%
\end{equation*}
and thus we obtain that the slope of $L_2$ is greater than the slope of $L_1$%
. Finally we have
\begin{equation*}
\lim_{N \to 0^+} V(N,P_S,P_N)=(N-\widehat{N})-\widehat{N} \ln\left( \dfrac{N%
}{\widehat{N}} \right)+c_1P_S+c_2P_N =+\infty.
\end{equation*}
By LaSalle's invariance principle we obtain that $E_2$ is globally
asymptotically stable for the system restricted to $\overset{\circ}{M} \cup
\partial M_N \backslash \{E_{1}\}$.
\end{proof}

\subsection{Uniform persistence of the predators}

We decompose the positive cone into
\begin{equation*}
\mathbb{R}_+^3=\partial M \cup \overset{\circ}{M}
\end{equation*}
where the boundary region is defined as
\begin{equation*}
\partial M := \partial M_P \cup \partial M_N.
\end{equation*}
It is clear that both regions $\overset{\circ}{M}$ and $\partial M$ are
positively invariant by the semiflow generated by the system. Moreover we
have the following result.

\begin{theorem}
Let Assumptions \ref{ASS2.1} and \ref{ASS2.2} be satisfied. If the interior
equilibrium exits then the predators uniformly persist with respect to the
domain decomposition $\left( \partial M,\overset{\circ }{M}\right) $. That
is to say that there exists $\varepsilon >0$ such that for each initial
value $N(0)>0$ and $P_{S}(0)+P_{H}(0)>0$
\begin{equation*}
\liminf_{t\rightarrow \infty }N(t)>\varepsilon \text{ and }%
\liminf_{t\rightarrow \infty }P_{S}(t)+P_{H}(t)>\varepsilon .
\end{equation*}
\end{theorem}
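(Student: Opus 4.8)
The plan is to apply the acyclicity theory of uniform persistence (in the form due to Hale and Waltman, which rests on the Butler--McGehee lemma), exploiting that the semiflow $\{U(t)\}_{t\geq0}$ is dissipative and admits a compact global attractor $\mathcal{A}\subset[0,\infty)^3$ by the Proposition of Section~\ref{Section3}. Persistence is taken with respect to the decomposition $\mathbb{R}_+^3=\partial M\cup\overset{\circ}{M}$, both of which are positively invariant. My first step is to describe the flow on the boundary and identify the maximal compact invariant set $\Omega(\partial M)$ contained in $\partial M$. On $\partial M_P=\{N=0\}$ the system reduces to the predator-only linear system of Section~\ref{Section2}, so by Lemma~\ref{LE2.4} every solution converges to $E_1$. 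On $\partial M_N=\{P_S=P_H=0\}$ it reduces to the logistic equation $N'=(\beta_N-\mu_N)N-\delta N^2$, whose solutions converge to $\widehat{N}$ (hence to $E_2$) when $N(0)>0$ and to $E_1$ when $N(0)=0$. Consequently $\Omega(\partial M)=\{E_1,E_2\}$.

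Next I would verify that $\{E_1,E_2\}$ is an acyclic covering of $\Omega(\partial M)$ by isolated invariant sets. Both equilibria are hyperbolic: $E_1$ by Lemma~\ref{LE4.2} and $E_2$ by Lemma~\ref{LE4.3} (the existence of the interior equilibrium, i.e. condition \eqref{4.1}, is exactly what forces $b<0$ and hence the hyperbolicity of $E_2$ with a one-dimensional unstable space). Within $\partial M$ the only heteroclinic connection is $E_1\to E_2$ along the $N$-axis $\partial M_N$; since the unstable direction of $E_2$ lies in the predator cone and points into $\overset{\circ}{M}$, the unstable manifold $W^u(E_2)$ leaves $\partial M$, so there is no connection $E_2\to E_1$ inside the boundary and the covering is acyclic.

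The heart of the argument is to show that each $E_i$ is a weak repeller for $\overset{\circ}{M}$, i.e. $W^s(E_i)\cap\overset{\circ}{M}=\emptyset$, and I expect the case of $E_2$ to be the main obstacle. For $E_1$ this is immediate: its stable eigenspace is the invariant plane $\{N=0\}=\partial M_P$, which coincides with the two-dimensional stable manifold, so $W^s(E_1)=\partial M_P\subset\partial M$. For $E_2$ the idea is to reuse the positive-eigenvector technique of Section~\ref{Section3}. The linearization of the predator block at $E_2$ is the Metzler (quasi-positive) matrix
\begin{equation*}
\left[\begin{array}{cc} -(\mu_P+\eta)-\rho\kappa\widehat{N} & \gamma \\ \rho\kappa\widehat{N}+\beta_P & \beta_P-\mu_P-\gamma \end{array}\right],
\end{equation*}
whose determinant equals $b<0$ under \eqref{4.1}; by Perron--Frobenius it has a simple positive dominant eigenvalue $\lambda_+>0$ with a strictly positive left eigenvector $(q_S,q_H)\gg0$. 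Setting $L:=q_SP_S+q_HP_H$, a direct computation from the $P_S$- and $P_H$-equations gives $L'=\lambda_+L+\rho\kappa(q_H-q_S)(N-\widehat{N})P_S$, and since the cross term is dominated by $L$ as $N\to\widehat{N}$, there is a neighborhood $U$ of $E_2$ and a constant $\lambda_0\in(0,\lambda_+)$ with $L'\geq\lambda_0L>0$ for interior solutions in $U$. Hence no interior trajectory can converge to $E_2$, which yields $W^s(E_2)\cap\overset{\circ}{M}=\emptyset$.

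With dissipativity, the acyclic covering $\{E_1,E_2\}$, and the two weak-repeller properties in hand, the Hale--Waltman theorem gives uniform persistence with respect to $(\partial M,\overset{\circ}{M})$: there exists $\varepsilon>0$ such that $\liminf_{t\to\infty}\delta(U(t)x,\partial M)>\varepsilon$ for every $x\in\overset{\circ}{M}$. Since near the boundary $\delta(\,\cdot\,,\partial M)=\min(N,\sqrt{P_S^2+P_H^2})$ controls both $N$ and $P_S+P_H$ from below, this yields simultaneously $\liminf_{t\to\infty}N(t)>\varepsilon$ and $\liminf_{t\to\infty}(P_S(t)+P_H(t))>\varepsilon$, as claimed.
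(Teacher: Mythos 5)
Your proposal is correct and follows essentially the same route as the paper: both invoke the Hale--Waltman acyclicity theorem on the decomposition $\left(\partial M,\overset{\circ}{M}\right)$ and reduce the problem to showing $W^{s}(E_{i})\cap \overset{\circ}{M}=\varnothing$ for $i=1,2$. The only differences are in the details: the paper establishes the repeller property at $E_1$ via the differential inequality $N'\geq N\left(\beta_N-\mu_N-\delta\varepsilon-\kappa\varepsilon\right)$ and at $E_2$ via a cooperative comparison system with unstable linear part, whereas you use the invariance of $\{N=0\}$ and a Perron left-eigenvector functional $L=q_S P_S+q_H P_H$ --- equivalent arguments --- and your verification of the acyclicity and isolatedness of $\{E_1,E_2\}$ is actually more complete than the paper's.
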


\begin{proof}
The equilibrium $E_{1}=\{(0,0,0)\}$ is clearly chained to $E_{2}=\{(\widehat{%
N},0,0)\}$. By using Theorem 4.1 in \cite{Hale-Waltman}, we only need to
prove that
\begin{equation*}
W^{s}(E_{i})\cap \overset{\circ }{M}=\varnothing ,
\end{equation*}%
where $i=1,2$ and
\begin{equation*}
W^{s}(E_{i})=\left\{ \left( N,P_{S},P_{H}\right) \in M:\omega (\left(
N,P_{S},P_{H}\right) )\neq \varnothing \text{ and }\omega (\left(
N,P_{S},P_{H}\right) )\subset E_{i}\right\} .
\end{equation*}%
Assume that there exists $E^{0}=\left( N^{0},P_{S}^{0},P_{H}^{0}\right) \in
\overset{\circ }{M}$ (which means $N^{0}>0$ and $P_{S}^{0}+P_{H}^{0}>0$)
such that $\omega (E^{0})\subset E_{1}.$ Then for any $\varepsilon >0,$
there exists $t_{0}\geq 0,$ such that
\begin{equation*}
N(t)+P_{S}(t)+P_{H}(t)\leq \varepsilon ,\forall t\geq t_{0}
\end{equation*}%
where $(N(t),P_{S}(t),P_{H}(t))=U(t)E^{0}.$ By using the first equation of
model \eqref{1.1}
\begin{equation*}
N^{\prime }=\beta _{N}N-\mu _{N}N-\delta N^{2}-N\,\kappa \,P_{S},
\end{equation*}%
we have
\begin{equation*}
N^{\prime }\geq N\left( \beta _{N}-\mu _{N}-\delta \varepsilon -\kappa
\,\varepsilon \right) .
\end{equation*}%
Therefore for $\varepsilon >0$ small enough, we have $\beta _{N}-\mu
_{N}-\delta \varepsilon -\kappa \,\varepsilon >0$ and then
\begin{equation*}
\lim_{t\rightarrow \infty }N(t)=\infty
\end{equation*}%
which is in contradiction to the dissipativity of the model. Assume that
there exists $E^{0}=\left( N^{0},P_{S}^{0},P_{H}^{0}\right) \in \overset{%
\circ }{M}$ such that $\omega (E^{0})\subset E_{2}.$ Then for any $%
\varepsilon >0,$ there exists $t_{0}\geq 0,$ such that
\begin{equation*}
\left\vert N(t)-\widehat{N}\right\vert +P_{S}(t)+P_{H}(t)\leq \varepsilon
,\forall t\geq t_{0}
\end{equation*}%
where $(N(t),P_{S}(t),P_{H}(t))=U(t)E^{0}$. By using the two last equation
of system \eqref{1.1}, we obtain
\begin{equation}
\begin{array}{ll}
P_{S}^{\prime }\geq -(\mu _{P}+\eta )P_{S}-P_{S}\rho \kappa \,\left(
\widehat{N}+\varepsilon \right) +\gamma P_{H} &  \\
P_{H}^{\prime }\geq -\mu _{P}P_{H}+P_{S}\,\rho \,\kappa \,\left( \widehat{N}%
-\varepsilon \right) -\gamma P_{H}+\beta _{P}\left( P_{S}+P_{H}\right)  &
\end{array}
\label{4.9}
\end{equation}%
By using the fact that for $\varepsilon >0$ small enough the right hand side
of \eqref{4.9} is a cooperative system together with Lemma \ref{LE4.3} we
deduce that
\begin{equation*}
\lim_{t\rightarrow \infty }P_{S}(t)+P_{H}(t)=\infty .
\end{equation*}%
This gives a contradiction with the dissipativity of the system. Therefore
the uniform persistence follows.
\end{proof}

As a consequence of the dissipativity as well as the uniform peristence (see
Magal and Zhao \cite{Magal-Zhao}) we deduce the following result.

\begin{theorem}
Let Assumptions \ref{ASS2.1} and \ref{ASS2.2} be satisfied. Assume in
addition that the interior equilibrium exits. Then the system \eqref{1.1}
has a global attractor $A_0$ in the interior region $\overset{\circ}{M}$.
Namely $A_0$ is a compact invariant set by the semiflow generated by %
\eqref{1.1} on $\overset{\circ}{M}$ and $A_0$ is locally stable and attracts
the compact subsets of $\overset{\circ}{M}$.
\end{theorem}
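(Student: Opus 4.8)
The plan is to recognize this statement as a direct application of the abstract theory of global attractors for uniformly persistent semiflows developed in Magal and Zhao \cite{Magal-Zhao}: essentially all of the analytic work has already been carried out in the preceding results, and what remains is to check the hypotheses of that framework and then invoke it.

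First I would record the structural ingredients. The semiflow $\{U(t)\}_{t\geq 0}$ is continuous on the state space $M=[0,\infty)^3$ by the Proposition of Section \ref{Section3}, and the decomposition $M=\partial M\cup\overset{\circ}{M}$ has $\overset{\circ}{M}$ relatively open in $M$, $\partial M=M\setminus\overset{\circ}{M}$ relatively closed, and both sets positively invariant under $U$. This is precisely the splitting required by the Magal--Zhao setup, with $\overset{\circ}{M}$ playing the role of the interior (coexistence) region and $\partial M$ the extinction boundary.

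Next I would verify the two remaining hypotheses. For dissipativity: by the Proposition of Section \ref{Section3} the full semiflow admits a compact global attractor $\mathcal{A}\subset M$, hence $U$ is point dissipative, and since the phase space is finite dimensional every bounded orbit is precompact, so $U$ is asymptotically smooth (indeed eventually compact on bounded sets). For uniform persistence: by the previous theorem, under the standing hypothesis that the interior equilibrium exists (condition \eqref{4.1}), there is $\varepsilon>0$ with $\liminf_{t\to\infty}N(t)>\varepsilon$ and $\liminf_{t\to\infty}\left(P_{S}(t)+P_{H}(t)\right)>\varepsilon$ for every point of $\overset{\circ}{M}$. Taking the persistence functional $p(N,P_S,P_H)=\min\left(N,\,P_S+P_H\right)$, this says exactly that $U$ is uniformly $p$-persistent and that orbits starting in $\overset{\circ}{M}$ are eventually uniformly bounded away from $\partial M$.

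With these facts in hand I would invoke the main attractor theorem of Magal and Zhao \cite{Magal-Zhao}: a continuous, point dissipative, asymptotically smooth semiflow that is uniformly persistent with respect to such a decomposition possesses a global attractor $A_0$ for the restriction $U|_{\overset{\circ}{M}}$, and this $A_0$ is compact, invariant, locally asymptotically stable, and attracts every compact subset of $\overset{\circ}{M}$. Concretely, $A_0$ is the global attractor of the semiflow generated by \eqref{1.1} restricted to the forward-invariant interior region, and because uniform persistence forces the $\omega$-limit set of each interior orbit to remain at distance at least $\varepsilon$ from $\partial M$, one gets $A_0\subset\overset{\circ}{M}$. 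I do not expect a genuine obstacle here: the only point requiring a word of care is the asymptotic-smoothness hypothesis, which is automatic in $\R^3$ once dissipativity is known, so the theorem of \cite{Magal-Zhao} applies verbatim and yields the claim.
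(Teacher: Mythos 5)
Your proposal is correct and follows exactly the route the paper takes: the authors state this theorem as an immediate consequence of the dissipativity proposition and the uniform persistence theorem via the abstract attractor result of Magal and Zhao \cite{Magal-Zhao}, which is precisely the framework you invoke. Your version merely spells out the hypothesis-checking (continuity, invariant decomposition, point dissipativity, asymptotic smoothness in finite dimensions, uniform persistence) that the paper leaves implicit.
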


\section{Interior region}
\label{Section5}
In this section, we will study the system in the interior region which
corresponds to the region of co-existence of preys and predators.

\subsection{Local stability of $E^{\ast }$}

The Jacobian matrix at the equilibrium $E^{\ast }$ is
\begin{equation*}
\left[
\begin{array}{ccc}
\left( \beta _{N}-\mu _{N}-2\delta N^{\ast }-\,\kappa \,P_{S}^{\ast }\right)
& -N^{\ast }\,\kappa & 0 \\
-P_{S}^{\ast }\,\rho \,\kappa & -\left( \mu _{P}+\eta +\,\rho \,\kappa
\,N^{\ast }\right) & \gamma \\
P_{S}^{\ast }\,\rho \,\kappa & \,\rho \,\kappa \,N^{\ast }\,+\beta _{P} &
\beta _{P}-\mu _{P}-\gamma%
\end{array}%
\right] .
\end{equation*}%
and the characteristic equation is

\bigskip

\begin{equation*}
\lambda ^{3}+p_{1}\lambda ^{2}+p_{2}\lambda +p_{3}=0
\end{equation*}%
with%
\begin{eqnarray*}
p_{1} &=&-\left( \beta _{N}-\mu _{N}-2\delta N^{\ast }-\,\kappa
\,P_{S}^{\ast }\right) +\left( \mu _{P}+\eta +\,\rho \,\kappa \,N^{\ast
}\right) -\left( \beta _{P}-\mu _{P}-\gamma \right),  \\
p_{2} &=&-\left( \mu _{P}+\eta +\,\rho \,\kappa \,N^{\ast }\right) \left(
\beta _{N}-\mu _{N}-2\delta N^{\ast }-\,\kappa \,P_{S}^{\ast }\right)
-N^{\ast }\,\kappa P_{S}^{\ast }\,\rho \,\kappa  \\
&&+\left( \beta _{N}-\mu _{N}-2\delta N^{\ast }-\,\kappa \,P_{S}^{\ast
}\right) \left( \beta _{P}-\mu _{P}-\gamma \right)  \\
&&-\left( \mu _{P}+\eta +\,\rho \,\kappa \,N^{\ast }\right) \left( \beta
_{P}-\mu _{P}-\gamma \right) -\left( \,\rho \,\kappa \,N^{\ast }\,+\beta
_{P}\right) \gamma,  \\
p_{3} &=&\left( \beta _{N}-\mu _{N}-2\delta N^{\ast }-\,\kappa \,P_{S}^{\ast
}\right) \left( \mu _{P}+\eta +\,\rho \,\kappa \,N^{\ast }\right) \left(
\beta _{P}-\mu _{P}-\gamma \right)  \\
&&+N^{\ast }\,\kappa \gamma P_{S}^{\ast }\,\rho \,\kappa +\gamma \left(
\beta _{N}-\mu _{N}-2\delta N^{\ast }-\,\kappa \,P_{S}^{\ast }\right) \left(
\,\rho \,\kappa \,N^{\ast }\,+\beta _{P}\right) + \\
&&P_{S}^{\ast }\,\rho \,\kappa N^{\ast }\,\kappa \left( \beta _{P}-\mu
_{P}-\gamma \right) .
\end{eqnarray*}%
By using Routh-Hurwitz criterion, we get that the equilibrium $E^{\ast }$ is
stable if and only if
\begin{equation*}
p_{1}>0,p_{1}p_{2}-p_{3}>0\text{ and }p_{3}>0.
\end{equation*}%
By computing, we have
\begin{eqnarray*}
p_{1} &=&\frac{-\kappa \,\rho \left( \beta _{P}-\mu _{P}-\gamma \right)
\left( \beta _{P}-\mu _{P}\right) -\gamma \left( \delta +\kappa \,\rho
\right) \,(\beta _{P}-\mu _{P}-\eta )-\delta \left( \beta _{P}-\mu
_{P}\right) \,\left( \mu _{P}+\eta \right) }{\kappa \,\rho \left( \beta
_{P}-\mu _{P}\right) \,}, \\
p_{2} &=&\frac{\left[ \,\left( \beta _{P}-\mu _{P}-\gamma \right) \left( \mu
_{P}+\eta \right) +\gamma \beta _{P}\right] \left\{ \left( \beta _{P}-\mu
_{P}\right) \left[ \delta \left( \beta _{P}+\eta +\gamma \right) +\kappa
\,\rho \left( \beta _{N}-\mu _{N}\right) \right] -2\delta \gamma \eta
\right\} \,}{\kappa \,\rho \left( \beta _{P}-\mu _{P}\right) ^{2}}, \\
p_{3} &=&\frac{\,\left[ \left( \beta _{P}-\mu _{P}-\gamma \right) \left( \mu
_{P}+\eta \right) +\gamma \beta _{P}\right] \left\{
\begin{array}{c}
-\delta \left( \beta _{P}-\mu _{P}-\gamma \right) \left( \mu _{P}+\eta
\right) -\gamma \delta \beta _{P}- \\
\,\kappa \,\rho \left( \beta _{P}-\mu _{P}\right) \left( \beta _{N}-\mu
_{N}\right)
\end{array}%
\right\} \,}{\kappa \,\rho \left( \beta _{P}-\mu _{P}\right) }.
\end{eqnarray*}

Thus we have the following result.
\begin{lemma}
\label{LE5.2} Let Assumptions \ref{ASS2.1}, \ref{ASS2.2} and inequality \ref{4.1} be satisfied. The equilibrium $E^{\ast }$ is stable if and only
if $\left( \beta _{P}-\mu _{P}\right) \left[ \kappa \,\rho \left( \beta
_{N}-\mu _{N}\right) +\delta \left( \eta +\gamma \right) \right] <\delta %
\left[ 2\gamma \eta -\beta _{P}\left( \beta _{P}-\mu _{P}\right) \right]$.
\end{lemma}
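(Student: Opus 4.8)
The plan is to run the Routh--Hurwitz test on the cubic $\lambda^{3}+p_{1}\lambda^{2}+p_{2}\lambda+p_{3}$, showing that under the existence condition \eqref{4.1} the conditions $p_{1}>0$ and $p_{3}>0$ hold automatically, so that local asymptotic stability is equivalent to the single remaining inequality $p_{1}p_{2}-p_{3}>0$, which I then reduce to the stated inequality.

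First I would simplify the Jacobian using the equilibrium relation. Since $E^{\ast}$ satisfies $\beta_{N}-\mu_{N}-\delta N^{\ast}-\kappa P_{S}^{\ast}=0$, the $(1,1)$ entry becomes $\beta_{N}-\mu_{N}-2\delta N^{\ast}-\kappa P_{S}^{\ast}=-\delta N^{\ast}<0$. Hence $p_{1}$, being the negative trace, is the sum of $\delta N^{\ast}$, $\left(\mu_{P}+\eta+\rho\kappa N^{\ast}\right)$ and $-\left(\beta_{P}-\mu_{P}-\gamma\right)$, all of which are strictly positive (the last because $\beta_{P}-\mu_{P}-\gamma<0$ by Remark \ref{REM2.3}). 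Thus $p_{1}>0$ with no extra hypothesis.

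Next I would settle $p_{3}>0$ from \eqref{4.1} alone. In the displayed expression for $p_{3}$ the leading bracket $\left(\beta_{P}-\mu_{P}-\gamma\right)\left(\mu_{P}+\eta\right)+\gamma\beta_{P}$ is exactly $-\mathrm{det}(M)$ for the matrix $M$ of \eqref{2.3}, hence negative since $\mathrm{det}(M)>0$ (this is the extinction condition encoded in Assumption \ref{ASS2.2} and Lemma \ref{LE2.4}). The accompanying brace equals $\delta\,\mathrm{det}(M)-\kappa\rho\left(\beta_{P}-\mu_{P}\right)\left(\beta_{N}-\mu_{N}\right)$, and rewriting \eqref{4.1} in the form $\kappa\rho\left(\beta_{P}-\mu_{P}\right)\left(\beta_{N}-\mu_{N}\right)>\delta\,\mathrm{det}(M)$ shows this brace is negative as well. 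The product of two negative quantities over the positive denominator yields $p_{3}>0$.

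The crux, and the main obstacle, is the condition $p_{1}p_{2}-p_{3}>0$, which now alone decides stability. One implication is free: both $p_{2}$ and $p_{3}$ carry the common factor $\left(\beta_{P}-\mu_{P}-\gamma\right)\left(\mu_{P}+\eta\right)+\gamma\beta_{P}=-\mathrm{det}(M)<0$, so $p_{2}$ has the same sign as the negative of its cofactor $\left(\beta_{P}-\mu_{P}\right)\left[\delta\left(\beta_{P}+\eta+\gamma\right)+\kappa\rho\left(\beta_{N}-\mu_{N}\right)\right]-2\delta\gamma\eta$; this cofactor being negative is precisely the stated inequality, so the stated inequality is equivalent to $p_{2}>0$, and when it fails $p_{1}p_{2}-p_{3}\le-p_{3}<0$, giving instability. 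For the converse I would substitute the explicit $p_{1},p_{2},p_{3}$, factor out $-\mathrm{det}(M)$ and the positive power of $\kappa\rho\left(\beta_{P}-\mu_{P}\right)$, expand, and check that after the anticipated heavy cancellation the surviving polynomial inequality collapses to exactly $\left(\beta_{P}-\mu_{P}\right)\left[\kappa\rho\left(\beta_{N}-\mu_{N}\right)+\delta\left(\eta+\gamma\right)\right]<\delta\left[2\gamma\eta-\beta_{P}\left(\beta_{P}-\mu_{P}\right)\right]$. This final expansion is the one genuinely laborious step, and I would organize it around the recurring factor $-\mathrm{det}(M)$ to keep the bookkeeping manageable.
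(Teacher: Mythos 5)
Your reduction is structurally the same as the paper's: form the characteristic polynomial $\lambda^{3}+p_{1}\lambda^{2}+p_{2}\lambda+p_{3}$ at $E^{\ast}$, check that $p_{1}>0$ holds unconditionally and $p_{3}>0$ under \eqref{4.1} (your identification of the common factor with $-\mathrm{det}(M)$ and of the brace in $p_3$ with $\delta\,\mathrm{det}(M)-\kappa\rho(\beta_P-\mu_P)(\beta_N-\mu_N)$ is correct), observe that the stated inequality is exactly the condition $p_{2}>0$, and note that $p_{2}\le 0$ forces $p_{1}p_{2}-p_{3}<0$. All of this is right, and it is also essentially all the paper does: the paper displays $p_1,p_2,p_3$ and then asserts the lemma without ever examining $p_1p_2-p_3$.

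The genuine gap is the step you defer as ``genuinely laborious'': you anticipate that $p_{1}p_{2}-p_{3}>0$ will, after cancellation, collapse to exactly $p_{2}>0$. It does not, and the expansion cannot be made to work, because on the surface $\{p_{2}=0\}$ one has $p_{1}p_{2}-p_{3}=-p_{3}$, and $p_{3}$ is controlled by \eqref{4.1} rather than by the stated inequality, so it stays strictly positive there; hence $p_{1}p_{2}-p_{3}<0$ on an open set \emph{inside} the region $\{p_{2}>0\}$, and $p_{2}>0$ is necessary but not sufficient for stability. A concrete counterexample: take $\mu_{N}=1$, $\beta_{N}=10.4$, $\delta=\kappa=\rho=1$, $\mu_{P}=1$, $\beta_{P}=1.5$, $\eta=1$, $\gamma=4$. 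Assumptions \ref{ASS2.1}, \ref{ASS2.2} and \eqref{4.1} all hold, $E^{\ast}=(2,7.4,7.4)$, and the stated inequality holds since $0.5\,[9.4+5]=7.2<7.25=1\cdot[8-0.75]$; yet a direct computation (which agrees with the paper's displayed formulas) gives $p_{1}=9.5$, $p_{2}=0.2$, $p_{3}=7.4$, so $p_{1}p_{2}-p_{3}=-5.5<0$ and $E^{\ast}$ is unstable. So the final implication you plan to verify is false as stated; the correct characterization is $p_{1}p_{2}-p_{3}>0$, which is strictly stronger than the displayed inequality. Your proposal faithfully reproduces the paper's argument, including its flaw, but it cannot be completed into a proof of the lemma as written.
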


\subsection{Three dimensional $\mathbb{K}$-competitive system}

 In this section we use a Poincar\'e-Bendixson theorem for
three dimensional $\mathbb{K}$-competitive system (see Smith \cite[Theorem
4.2 p. 43]{Smith}). By applying this theorem to the system \eqref{1.1}
restricted to the interior global attractor $A_0$ we obtain the following
result.

\begin{theorem}
Suppose that $E^\ast=\left( N^\ast, P_S^\ast, P_H^\ast \right)$ exists and
is hyperbolic and unstable for \eqref{1.1}. Then the stable manifold $W^s\left(E^\ast \right)$ of $%
E^\ast$ is one dimensional and the omega limit set
$\omega \left( N(0),P_S(0),P_H(0) \right) $ is a nontrivial periodic orbit
in $\mathbb{R}_+^3$ for every $\left( N(0),P_S(0),P_H(0) \right) \in \mathbb{%
R}_+^3 \setminus W^s\left(E^\ast \right)$.
\end{theorem}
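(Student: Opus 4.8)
The plan is to realise \eqref{1.1} as an irreducible competitive system for a suitable orthant order and then to invoke the Poincar\'e--Bendixson theorem for three dimensional competitive systems (Smith \cite[Theorem 4.2 p. 43]{Smith}). First I would record the off-diagonal entries of the Jacobian $J(N,P_S,P_H)$ of \eqref{1.1} at an arbitrary interior point: $J_{12}=-\kappa N<0$, $J_{21}=-\rho\kappa P_S<0$, $J_{13}=0$, $J_{23}=\gamma>0$, $J_{31}=\rho\kappa P_S>0$ and $J_{32}=\rho\kappa N+\beta_P>0$ on $(0,\infty)^3$. Choosing the sign vector $\varepsilon=(+1,+1,-1)$ and the orthant $\mathbb{K}=\{(x_1,x_2,x_3):x_1\ge 0,\ x_2\ge 0,\ x_3\le 0\}$, one checks that $\varepsilon_i\varepsilon_j J_{ij}\le 0$ for all $i\neq j$, so the system is $\mathbb{K}$-competitive; moreover the nonzero off-diagonal entries induce the directed edges $1\to 2$, $2\to 1$, $2\to 3$, $3\to 2$ and $1\to 3$, which form a strongly connected graph, so $J$ is irreducible on $(0,\infty)^3$.

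Next I would assemble the ingredients supplied by the earlier sections. By the dissipativity Proposition of Section \ref{Section3} every forward orbit has compact closure, and by the uniform persistence together with the existence of the interior attractor $A_0\subset\overset{\circ}{M}$, every orbit issued from $\overset{\circ}{M}$ has its omega limit set contained in $A_0$, hence bounded away from the boundary equilibria $E_1$ and $E_2$; by strong positivity of the flow $A_0\subset(0,\infty)^3$, so the competitive and irreducible structure above holds on all of $A_0$. Consequently, for $x\in\overset{\circ}{M}$ the set $\omega(x)$ is a nonempty compact invariant subset of the region where \eqref{1.1} is irreducible $\mathbb{K}$-competitive, and by Lemma \ref{LE4.1} the only equilibrium it can contain is $E^\ast$. (The statement is understood for interior initial data, consistently with the restriction to $A_0$ announced before the theorem; for boundary data Theorem \ref{TH4.4} and its counterpart already describe convergence to $E_1$ or $E_2$.) To fix $\dim W^s(E^\ast)$ I would read off the characteristic polynomial $\lambda^3+p_1\lambda^2+p_2\lambda+p_3$ at $E^\ast$: since $p_3>0$ under \eqref{4.1}, the product of the three eigenvalues equals $-p_3<0$, and a short case check (three real roots, or one real root with a complex-conjugate pair) shows that a negative product together with the hyperbolicity and instability of $E^\ast$ leaves exactly one eigenvalue with negative real part, so $W^s(E^\ast)$ is one dimensional and $W^u(E^\ast)$ is two dimensional.

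Finally I would apply the Poincar\'e--Bendixson alternative: for an irreducible three dimensional competitive system a compact omega limit set either is a nonconstant periodic orbit or contains an equilibrium. Fix $x\in\overset{\circ}{M}\setminus W^s(E^\ast)$. If $\omega(x)=\{E^\ast\}$ then $U(t)x\to E^\ast$, i.e. $x\in W^s(E^\ast)$, a contradiction, so $\omega(x)\neq\{E^\ast\}$. If $\omega(x)$ contained $E^\ast$ while being strictly larger, a Butler--McGehee argument (see \cite{Hale-Waltman}) would produce a full orbit of $\omega(x)$ lying in $W^s(E^\ast)\setminus\{E^\ast\}$; but $W^s(E^\ast)$ is one dimensional, so its nontrivial branch would have to either escape the compact set $\omega(x)$ or close up into a homoclinic loop at $E^\ast$, and the uniqueness of the interior equilibrium (Lemma \ref{LE4.1}) together with the monotone competitive dynamics excludes such a loop. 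Hence $E^\ast\notin\omega(x)$, the set $\omega(x)$ contains no equilibrium, and the alternative forces $\omega(x)$ to be a nontrivial periodic orbit.

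I expect the main obstacle to be this last step, namely excluding $E^\ast$ from a strictly larger omega limit set. The verification of the $\mathbb{K}$-competitive structure and the eigenvalue count are essentially algebraic, whereas ruling out a homoclinic return to the unique interior equilibrium requires combining the one dimensionality of $W^s(E^\ast)$, the Butler--McGehee lemma, and the competitive (monotone) flow carefully; this is where the argument must be made airtight.
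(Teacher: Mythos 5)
Your proposal is correct and follows essentially the same route as the paper: identify \eqref{1.1} as a $\mathbb{K}$-competitive system for the orthant $\mathbb{K}=\{N\ge 0,\ P_S\ge 0,\ P_H\le 0\}$ via the sign pattern of the off-diagonal Jacobian entries, restrict to the interior attractor $A_0$ furnished by dissipativity and uniform persistence, and invoke the three-dimensional Poincar\'e--Bendixson theorem of Smith \cite[Theorem 4.2 p.~43]{Smith}. The only difference is that you explicitly work out several ingredients (irreducibility, the count $\dim W^s(E^\ast)=1$ from $p_3>0$, and the Butler--McGehee exclusion of $E^\ast$ from larger omega limit sets) that the paper delegates wholesale to the cited theorem, and you correctly note that the conclusion should be read for initial data in $\overset{\circ}{M}$.
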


\begin{proof}
The Jacobian matrix of the vector field \eqref{1.1} at the point $(N,P_S,
P_H ) \in (0,\infty)^3$ is given by
\begin{equation}
J= \left(
\begin{array}{ccc}
(\beta_N-\mu_N)-2 \delta N-\kappa P_S & -\kappa N & 0 \\
- \rho \kappa P_S & -\left(\mu_P+\eta \right)- \rho \kappa N & \gamma \\
\rho \kappa P_S & \rho \kappa N+ \beta_P & -\mu_P+\beta_P-\gamma%
\end{array}
\right).
\end{equation}
The off-diagonal entries of $J$ are sign-stable and sign symmetric in $%
\mathbb{R}_+^3$.

Let
\begin{equation*}
\mathbb{K}=\left\lbrace (N,P_S, P_H ) \in \mathbb{R}^3: N \geq 0,P_S \geq 0,
P_H \leq 0 \right\rbrace.
\end{equation*}
The system is $\mathbb{K}$-competitive, since the matrix of the
time-reversed linearized system $-J$ is cooperative with respect to the cone
$\mathbb{K}$.
\end{proof}

\section{Convergence to the Rosenzweig-MacArthur model}

\label{Section6}

The time scale for the life expectancy (as well as the time scale needed for
the reproduction) is the year, while the time needed for the lynx to handle
the rabbit is measured by days (no more than one week). Therefore there is a huge
difference between the time scales for the vital dynamic and the
consumption dynamic.

The consumption of prey by the predator is a fast process compared to the
vital dynamic which is slow. In the model $\gamma^{-1}$ is the
average time spent by the predators to handle preys. $\gamma^{-1}$ should be
very small in comparison with the other parameters. Then it makes sense to make the following assumption.

\begin{Assumption}
\label{ASS6.1} Assume that
\begin{equation*}
\rho=\dfrac{\chi }{\varepsilon} \text{ and } \gamma=\dfrac{1}{\varepsilon}
\end{equation*}
with $\varepsilon \ll 1$ is small.
\end{Assumption}

Under the above assumption the system \eqref{1.1} becomes

\begin{equation}  \label{6.1}
\left\{
\begin{array}{ll}
\overset{\cdot}{N^\varepsilon}=\left( \beta _{N}-\mu _{N}\right)
N^\varepsilon -\delta (N^{\varepsilon})^{2} - \kappa N^\varepsilon
\,P_{S}^\varepsilon \vspace{0.1cm} \\
\overset{\cdot}{P^\varepsilon_{S}}=- (\mu _{P}+\eta ) P^\varepsilon_{S}-\dfrac{\chi }{
\varepsilon} \kappa N^\varepsilon P_{S}^\varepsilon+\dfrac{1}{\varepsilon} P_{H}^\varepsilon \vspace{0.1cm} \\
 
\overset{\cdot}{P^\varepsilon_{H}}=- \mu _{P}P^\varepsilon_{H} \hspace{0.6cm}+\dfrac{\chi }{
\varepsilon} \kappa N^\varepsilon P_{S}^\varepsilon - \dfrac{1}{\varepsilon} P_{H}^\varepsilon
+\beta _{P} \left(P_{S}^\varepsilon+P_{H}^\varepsilon \right)
\end{array}%
\right.
\end{equation}
and we fix the initial value
\begin{equation*}
N^\varepsilon(0)=N_0 \geq 0,\, P^\varepsilon_{S}(0)=P_{S0} \geq 0 \text{ and
} P^\varepsilon_{H}(0)=P_{H0} \geq 0.
\end{equation*}
The first equation of \eqref{6.1} is
\begin{equation}  \label{6.2}
\overset{\cdot}{N^\varepsilon}=\left( \beta _{N}-\mu _{N}\right)
N^\varepsilon -\delta (N^{\varepsilon})^{2}- \kappa N^\varepsilon
\,P_{S}^\varepsilon.
\end{equation}
Hence
\begin{equation}  \label{6.3}
\overset{\cdot}{N^\varepsilon}\leq \left( \beta _{N}-\mu _{N}\right)
N^\varepsilon.
\end{equation}
By summing the two last equations of \eqref{6.1} we obtain
\begin{equation}  \label{6.4}
\overset{\cdot}{P^\varepsilon}=\left(\beta_P-\mu_P \right)
P^\varepsilon-\eta P^\varepsilon_S
\end{equation}
and $P^\varepsilon_S \geq 0$ implies that
\begin{equation}  \label{6.5}
\overset{\cdot}{P^\varepsilon}\leq \left(\beta_P-\mu_P \right) P^\varepsilon.
\end{equation}
Therefore by using \eqref{6.3} and \eqref{6.5} we obtain the following
finite time estimation uniform in $\varepsilon$.

\begin{lemma}
\label{LE6.2} For each $\tau>0$ we can find a constant $M=M(\tau, N_0,P_0)>0$
(independent of $\varepsilon>0$) such that
\begin{equation}  \label{6.6}
0 \leq N^\varepsilon(t) \leq M \text{ and } 0 \leq P^\varepsilon(t) \leq M,
\forall t \in [0, \tau].
\end{equation}
and
\begin{equation}  \label{6.7}
\sup_{t \in \left[0,\tau\right]} \vert \overset{\cdot}{N^\varepsilon}(t)
\vert \leq M \text{ and } \sup_{t \in \left[0,\tau\right]} \vert \overset{%
\cdot}{P^\varepsilon}(t) \vert \leq M.
\end{equation}
\end{lemma}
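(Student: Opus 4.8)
The plan is to derive all four estimates from $\varepsilon$-free differential (in)equalities, so that the resulting constant $M$ inherits no dependence on $\varepsilon$. First I would establish the magnitude bounds \eqref{6.6}. The upper bounds come directly from the comparison principle applied to the scalar inequalities \eqref{6.3} and \eqref{6.5}: since the coefficients $\beta_N-\mu_N$ and $\beta_P-\mu_P$ do not involve $\varepsilon$, Gronwall's inequality gives $N^\varepsilon(t)\le N_0\,e^{(\beta_N-\mu_N)\tau}$ and $P^\varepsilon(t)\le P_0\,e^{(\beta_P-\mu_P)\tau}$ for all $t\in[0,\tau]$, with $P_0=P_{S0}+P_{H0}$, both bounds uniform in $\varepsilon$. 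For the lower bounds I would check that the nonnegative octant is positively invariant by inspecting each coordinate face of \eqref{6.1}: on $\{N=0\}$ one has $\overset{\cdot}{N^\varepsilon}=0$; on $\{P_S=0,\ P_H\ge 0\}$ one has $\overset{\cdot}{P^\varepsilon_S}=\varepsilon^{-1}P_H^\varepsilon\ge 0$; and on $\{P_H=0,\ N,P_S\ge 0\}$ one has $\overset{\cdot}{P^\varepsilon_H}=\varepsilon^{-1}\chi\kappa N^\varepsilon P_S^\varepsilon+\beta_P P_S^\varepsilon\ge 0$. Hence $N^\varepsilon,P_S^\varepsilon,P_H^\varepsilon\ge 0$, giving $N^\varepsilon\ge 0$ and $P^\varepsilon=P_S^\varepsilon+P_H^\varepsilon\ge 0$, and in particular $0\le P_S^\varepsilon\le P^\varepsilon$, a bound I will reuse below.

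Next I would bound the derivatives in \eqref{6.7} purely algebraically, using the magnitude bounds just obtained. For $\overset{\cdot}{N^\varepsilon}$ the first line of \eqref{6.1} yields $|\overset{\cdot}{N^\varepsilon}|\le (\beta_N-\mu_N)N^\varepsilon+\delta (N^\varepsilon)^2+\kappa N^\varepsilon P_S^\varepsilon$, and since $N^\varepsilon$ and $P_S^\varepsilon\le P^\varepsilon$ are controlled by the constants above, this is bounded uniformly in $\varepsilon$. The essential point is that I do \emph{not} attempt to bound $\overset{\cdot}{P^\varepsilon_S}$ or $\overset{\cdot}{P^\varepsilon_H}$ separately: each carries an $O(\varepsilon^{-1})$ term and indeed blows up as $\varepsilon\to 0$. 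Instead I would invoke the summed equation \eqref{6.4}, $\overset{\cdot}{P^\varepsilon}=(\beta_P-\mu_P)P^\varepsilon-\eta P_S^\varepsilon$, in which the two singular fluxes $\pm\varepsilon^{-1}\chi\kappa N^\varepsilon P_S^\varepsilon$ and $\pm\varepsilon^{-1}P_H^\varepsilon$ cancel exactly. Consequently $|\overset{\cdot}{P^\varepsilon}|\le (\beta_P-\mu_P)P^\varepsilon+\eta P_S^\varepsilon$, once more bounded uniformly in $\varepsilon$ by the magnitude bounds.

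The only genuinely delicate feature is this cancellation of the fast exchange terms in the sum $P^\varepsilon=P_S^\varepsilon+P_H^\varepsilon$; without it the derivative estimate would be hopeless, and it is precisely why the statement asks for a bound on $\overset{\cdot}{P^\varepsilon}$ rather than on the two predator components individually. Everything else is a routine application of the comparison principle together with the observation that the surviving coefficients in \eqref{6.3}, \eqref{6.4} and \eqref{6.5} are independent of $\varepsilon$. I would finish by taking $M$ to be the maximum of the four bounds produced above (together with $N_0$ and $P_0$ to cover $t=0$), which by construction depends only on $\tau$, $N_0$ and $P_0$ and not on $\varepsilon$.
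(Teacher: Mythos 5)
Your proof is correct and follows essentially the same route as the paper: Gronwall/comparison on \eqref{6.3} and \eqref{6.5} for the magnitude bounds, positivity of $P_S^\varepsilon$ and $P_H^\varepsilon$ to control each component by $P^\varepsilon$, and then the $N$-equation \eqref{6.2} together with the summed equation \eqref{6.4} --- in which the $O(\varepsilon^{-1})$ exchange terms cancel --- for the derivative bounds. The only difference is that you spell out the positive invariance of the octant and the cancellation explicitly, which the paper leaves implicit.
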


\begin{proof}
We first deduce \eqref{6.6} by using the inequalities \eqref{6.3} and %
\eqref{6.5}. By using the fact $P_S \geq 0$ and $P_H \geq 0$ we have 
\begin{equation}  \label{6.8}
0 \leq P^\varepsilon_{S}(t) \leq M, \text{ and } 0 \leq P^\varepsilon_{H}(t)
\leq M, \forall t \in [0, \tau].
\end{equation}
Therefore by injecting these estimations into \eqref{6.2} and \eqref{6.4} we
deduce \eqref{6.7}.
\end{proof}

By using Lemma \ref{LE4.1}, and the Arzela-Ascoli theorem we deduce that we
can find a sequence $\varepsilon_n \to 0$ such that
\begin{equation*}
\lim_{n \to \infty }N^{\varepsilon_n}=N \text{ and } \lim_{n \to \infty
}P^{\varepsilon_n}=P
\end{equation*}
where the convergence is taking place in $C([0,\tau],\mathbb{R})$ for the
uniform convergence topology.

Moreover by using the fact that $P_{H}^\varepsilon=P^\varepsilon
-P_{S}^\varepsilon$, the $P_{S}^\varepsilon$-equation can be rewritten as
\begin{equation}  \label{6.9}
\overset{\cdot}{P^\varepsilon_{S}}=-\left( (\mu _{P}+\eta )+\dfrac{\chi }{%
\varepsilon} \kappa N^\varepsilon\right) P_{S}^\varepsilon+\dfrac{1}{%
\varepsilon} \left(P^\varepsilon -P_{S}^\varepsilon \right).
\end{equation}

By using \eqref{6.8}, the map $t \to P^\varepsilon_{S}(t)$ is bounded
uniformly in $\varepsilon$. So the family $\varepsilon_n \to
P^{\varepsilon_n}_{S} $ is bounded in $L^\infty\left(\left(0,\tau \right),
\mathbb{R}\right)$ which is the dual space of $L^1\left(\left(0,\tau
\right), \mathbb{R}\right)$. Therefore by using the
Banach-Alaoglu-Bourbaki's theorem, we can find a sub-sequence (denoted with
the same index) such that $\varepsilon_n \to P^{\varepsilon_n}_{S} $
convergences to $P_{S} \in L^\infty\left(\left(0,\tau \right), \mathbb{R}%
\right)$ for the weak star topology of $\sigma \left(
L^\infty\left(\left(0,\tau \right), \mathbb{R}\right), L^1\left(\left(0,\tau
\right), \mathbb{R}\right) \right)$. That is to say that for each $\chi \in
L^1\left(\left(0,\tau \right), \mathbb{R}\right)$
\begin{equation*}
\lim_{n \to \infty} \int_0^\tau \chi(t)
\left(P^{\varepsilon_n}_{S}(t)-P_{S}(t) \right)dt=0.
\end{equation*}
By multiplying \eqref{6.9} by $\chi \in C^1_c\left( \left(0,\tau \right),
\mathbb{R} \right)$ (the space $C^1$ functions with compact support in $%
\left(0,\tau \right)$) and by integrating over $[0,\tau]$ we obtain
\begin{equation*}
-\int_0^\tau \overset{\cdot}{ \chi}(t)P^{\varepsilon_n}_{S}(t)dt=\int_0^\tau
\chi(t) \left[ -\left( (\mu _{P}+\eta )+\dfrac{\chi }{\varepsilon_n} \kappa
N^{\varepsilon_n}(t)\right) P_{S}^{\varepsilon_n}(t)+\dfrac{1}{\varepsilon_n}
\left(P^{\varepsilon_n}(t) -P_{S}^{\varepsilon_n}(t) \right) \right] dt.
\end{equation*}
Hence by multiplying both sides by $\varepsilon_n$ and by taking the limit
when $n$ goes to infinity we obtain
\begin{equation*}
0=\int_0^\tau \chi(t) \left[ -\left( \chi \kappa N(t)\right) P_{S}(t)+
\left(P(t) -P_{S}(t) \right) \right] dt
\end{equation*}
and since $C^1_c\left( \left(0,\tau \right), \mathbb{R} \right)$ is dense in
$L^1\left( \left(0,\tau \right), \mathbb{R} \right)$ we deduce that
\begin{equation*}
P^{\varepsilon_n}_{S}(t) \overset{*}{ \rightharpoonup } \dfrac{1}{1+\chi
\kappa N(t)}P(t) \text{ as } n \to \infty.
\end{equation*}
By using the first equation of \eqref{6.1} and \eqref{6.4}, we have
\begin{equation*}
\begin{array}{l}
N^{\varepsilon_n}(t)=\dfrac{e^{\int_0^t \beta _{N}-\mu _{N} -\kappa
P_{S}^{\varepsilon_n}(\sigma)d\sigma} N_0}{1+\delta \int_0^t e^{\int_0^l
\beta _{N}-\mu _{N} -\kappa P_{S}^{\varepsilon_n}(\sigma)d\sigma} N_0 dl} ,
\\
P^{\varepsilon_n}(t)=e^{ \left(\beta_P-\mu_P \right)t} P_0-\int_0^t e^{
\left(\beta_P-\mu_P \right)\left( t-s\right)} \eta P^\varepsilon_S(\sigma)
d\sigma.%
\end{array}
\\
\end{equation*}
By taking the limit on both sides we deduce that
\begin{equation*}
\left\lbrace
\begin{array}{l}
\overset{\cdot}{N}=\left(\beta_{N}-\mu_{N} \right) N(t)-\delta N(t)^2-\dfrac{%
\kappa N(t)}{1+\chi \kappa N(t)} P(t), \\
\overset{\cdot}{P}=\left(\beta_P-\mu_P \right) P-\eta \dfrac{1}{ 1+\chi
\kappa N(t)} P.%
\end{array}
\right.
\end{equation*}
Therefore we obtain the following theorem.

\begin{theorem}
\label{TH6.3} For each fixed initial value $N_0 \geq 0$, $P_{S0} \geq 0$
and $P_{H0} \geq 0$. Let $\tau>0$ be fixed. Then the solution of \eqref{6.1}
satisfies the following
\begin{equation*}
\lim_{\varepsilon \to 0 }N^{\varepsilon}(t) =N(t) \text{ and }
\lim_{\varepsilon \to 0 }P^{\varepsilon}_S(t)+P^{\varepsilon}_H(t) =P(t)
\end{equation*}
where the limit is uniform on $[0, \tau]$ and $N(t)$ and $P(t)$ is the
solution of the Rosenzweig-MacArthur model
\begin{equation}  \label{6.10}
\left\lbrace
\begin{array}{l}
\overset{\cdot}{N}=\left(\beta_{N}-\mu_{N} \right) N(t)-\delta N(t)^2-\dfrac{%
\kappa N(t)}{1+\chi \kappa N(t)} P(t), \\
\overset{\cdot}{P}=\left(\beta_P-\mu_P-\eta \right) P+\eta \dfrac{\chi
\kappa N(t)}{ 1+\chi \kappa N(t)} P%
\end{array}
\right.
\end{equation}
with initial value
\begin{equation*}
N(0)=N_0 \text{ and } P(0)=P_{S0}+P_{H0}.
\end{equation*}
\end{theorem}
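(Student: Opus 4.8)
The plan is to read \eqref{6.1} as a slow-fast (singular perturbation) problem and to replace, in the limit $\varepsilon\to 0$, the fast variable $P_S^\varepsilon$ by its quasi-steady-state value. The two slow variables are $N^\varepsilon$ and the total predator population $P^\varepsilon=P_S^\varepsilon+P_H^\varepsilon$, whose equations \eqref{6.2} and \eqref{6.4} carry no factor $1/\varepsilon$; the fast variable is $P_S^\varepsilon$, whose equation \eqref{6.9} contains the $O(1/\varepsilon)$ terms. Multiplying \eqref{6.9} by $\varepsilon$ and formally sending $\varepsilon\to 0$ suggests the slow-manifold relation $\chi\kappa N P_S=P-P_S$, i.e. $P_S=P/(1+\chi\kappa N)$, and inserting this into the slow equations reproduces exactly \eqref{6.10}. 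The whole task is to make this passage rigorous, the subtlety being that $P_S^\varepsilon$ itself does \emph{not} converge uniformly.

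First I would record the uniform-in-$\varepsilon$ a priori estimates. Lemma \ref{LE6.2} already supplies $0\le N^\varepsilon,P^\varepsilon\le M$ together with $\sup_{[0,\tau]}|\dot N^\varepsilon|,\sup_{[0,\tau]}|\dot P^\varepsilon|\le M$, and hence $0\le P_S^\varepsilon,P_H^\varepsilon\le M$ by nonnegativity. Crucially, no bound is available for $\dot P_S^\varepsilon$, which is genuinely $O(1/\varepsilon)$; this is precisely why $P_S^\varepsilon$ is not equicontinuous and why the theorem asserts uniform convergence only for the slow quantities $N^\varepsilon$ and $P^\varepsilon$. Since $\{N^\varepsilon\}$ and $\{P^\varepsilon\}$ are bounded and equicontinuous, Arzelà-Ascoli yields a sequence $\varepsilon_n\to 0$ with $N^{\varepsilon_n}\to N$ and $P^{\varepsilon_n}\to P$ uniformly on $[0,\tau]$. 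For $P_S^\varepsilon$ I would instead invoke weak-$\ast$ compactness: it is bounded in $L^\infty((0,\tau),\R)=(L^1)^\ast$, so Banach-Alaoglu gives a further subsequence with $P_S^{\varepsilon_n}\overset{\ast}{\rightharpoonup}P_S$.

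To identify this weak limit I would multiply \eqref{6.9} by $\varepsilon_n\,\phi$ with $\phi\in C^1_c((0,\tau),\R)$, integrate by parts to move the derivative onto $\phi$, and let $n\to\infty$. The derivative contribution $-\int_0^\tau \varepsilon_n\dot\phi\,P_S^{\varepsilon_n}\,dt$ vanishes (prefactor $\varepsilon_n$ times a bounded quantity, no boundary terms since $\phi$ has compact support), as does $\int_0^\tau \varepsilon_n\phi\,(\mu_P+\eta)P_S^{\varepsilon_n}\,dt$, while the $O(1)$ terms survive and give $\int_0^\tau \phi\,[-\chi\kappa N P_S+(P-P_S)]\,dt=0$. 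By density of $C^1_c$ in $L^1$ this is the slow-manifold identity $P_S=P/(1+\chi\kappa N)$ a.e. I would then pass to the limit in the Duhamel/integral representations of $N^{\varepsilon_n}$ and $P^{\varepsilon_n}$ displayed before the statement: there $P_S^{\varepsilon_n}$ appears only linearly, inside $\int_0^t(\beta_N-\mu_N-\kappa P_S^{\varepsilon_n})\,d\sigma$ and $\int_0^t\eta P_S^{\varepsilon_n}\,d\sigma$, so testing the weak-$\ast$ convergence against $\mathbf 1_{[0,t]}\in L^1$ makes these integrals converge, and the uniform convergence of $N^{\varepsilon_n}$ carries the rest; substituting the slow-manifold relation shows $(N,P)$ solves \eqref{6.10}.

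The hard part is the passage to the limit in the nonlinear product $N^\varepsilon P_S^\varepsilon$ appearing in \eqref{6.2}: weak-$\ast$ convergence of $P_S^\varepsilon$ alone is not stable under products, and it is the \emph{strong} (uniform) convergence of $N^\varepsilon$, together with the uniform $L^\infty$ bound on $P_S^\varepsilon$, that yields $N^{\varepsilon_n}P_S^{\varepsilon_n}\overset{\ast}{\rightharpoonup}N P_S$ (write the difference as $(N^{\varepsilon_n}-N)P_S^{\varepsilon_n}+N(P_S^{\varepsilon_n}-P_S)$ and test against $\phi\in L^1$). One should also note $N(t)\ge 0$, so $1+\chi\kappa N>0$ and the quasi-steady-state formula is well defined. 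Finally, since \eqref{6.10} has a unique solution for the data $N(0)=N_0$, $P(0)=P_{S0}+P_{H0}$, the limit is independent of the extracted subsequence, and the standard ``every subsequence has a further subsequence with the same limit'' argument upgrades the convergence to the full limit $\varepsilon\to 0$, uniformly on $[0,\tau]$.
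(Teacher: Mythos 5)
Your proposal is correct and follows essentially the same route as the paper: uniform-in-$\varepsilon$ bounds and Arzel\`a--Ascoli for the slow variables $N^\varepsilon$, $P^\varepsilon$, Banach--Alaoglu weak-$\ast$ compactness for the fast variable $P_S^\varepsilon$, identification of the quasi-steady-state relation $P_S=P/(1+\chi\kappa N)$ by testing the $\varepsilon$-scaled equation \eqref{6.9} against $C^1_c$ functions, and passage to the limit in the integral representations where $P_S^{\varepsilon_n}$ enters only through $\int_0^t P_S^{\varepsilon_n}\,d\sigma$. Your closing step (uniqueness of the solution of \eqref{6.10} to upgrade subsequential convergence to the full limit $\varepsilon\to 0$) and your explicit treatment of the product $N^{\varepsilon}P_S^{\varepsilon}$ are details the paper leaves implicit, and they are correct additions.
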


%


\begin{remark}
If instead of the model \eqref{1.1} we consider the following model
\begin{equation}  \label{6.11}
\left\{
\begin{array}{l}
\overset{\cdot}{N}=\left( \beta _{N}-\mu _{N}\right) N-\delta N^2 - \kappa
N^l \,P_{S} \\
\overset{\cdot}{P_{S}}=- (\mu _{P}+\eta ) P_{S}-\rho \kappa N^m P_{S}+\gamma
P_{H}, \\
\overset{\cdot}{P_{H}}= \beta_{P} \left( P_{S}+P_{H} \right) -\mu _{P} P_{H}
+\rho \kappa N^m P_{S}-\gamma P_{H}%
\end{array}
\right.
\end{equation}
Then by using the same procedure above we obtain a convergence result to the most classical predator prey model
\begin{equation}  \label{6.12}
\left\lbrace
\begin{array}{l}
\overset{\cdot}{N}=\left(\beta_{N}-\mu_{N} \right) N(t)-\delta N(t)^2-\dfrac{%
\kappa N(t)^l}{1+\chi \kappa N(t)^m} P(t), \\
\overset{\cdot}{P}=\left(\beta_P-\mu_P-\eta \right) P+\eta \dfrac{\chi
\kappa N(t)^m}{ 1+\chi \kappa N(t)^m} P.%
\end{array}
\right.
\end{equation}
By choosing $l=m$ we obtain the classical Holling's type functional
response. 
\end{remark}

\section{Application to the snowshoe hares and lynxes} 
\label{Section7}
In this section we reconsider predator-prey system form by the hares (prey) and lynxes (predator) in the years 1900-1920 recorded by the Hudson Bay Company. The data are available for example in \cite{Deuflhard-Roblitz}.

\begin{table}[H] \centering
		\begin{tabular}{ccc}
			\doubleRule
			\textbf{Year} & \textbf{Hares} (in thousands)  &  \textbf{Lynx} (in thousands) \\
			\hline
			$1900$ & $ 30$  & $4$ 
			\\
			$1901$ & 	$47.2$ & 	$6.1$
			\\
			$1902$ & 	$70.2$ & 	$9.8$
			\\
			$1903$	& $77.4$ &	$35.2$
			\\
			$1904$	& $36.3$ &	$59.4$\\
			$1905$	& $20.6$	 & $41.7$\\
$1906$	& $18.1$ & 	$19$\\
$1907$	& $21.4$ & 	$13$\\
$1908$ &	 $22$	&  $8.3$\\
$1909$	& $25.4$	& $9.1$\\
$1910$	& $27.1$ &	$7.4$\\
$1911$ &	 $40.3$ & 	$8$\\
$1912$ &	 $57$	& $12.3$\\
$1913$ &	 $76.6$	 & $19.5$\\
$1914$ &	 $52.3$	 & $45.7$\\
$1915$ &	 $19.5$ &	$51.1$\\
$1916$ & 	$11.2$	& $29.7$\\
$1917$ &	 $7.6$	& $15.8$\\
$1918$ & 	$14.6$	& $9.7$\\
$1919$ &	 $16.2$ & 	$10.1$\\
$1920$ & 	$24.7$ & 	$8.6$\\	
			\end{tabular} 
		\caption{\textit{Numbers of hares (prey) and lynxes (predator) in the years 1900-1920
recorded by the Hudson Bay Company}}\label{Table1}
	\end{table}
	
The limit model obtain for $\varepsilon$ small enough is given by  	
\begin{equation}  \label{7.1}
\left\lbrace
\begin{array}{l}
\overset{\cdot}{N}=\left(\beta_{N}-\mu_{N} \right) N \left( 1-\dfrac{N}{\delta} \right) 
-\dfrac{\kappa P N}{1+\chi \kappa N} , \\
\overset{\cdot}{P}=\left(\beta_P-\mu_P-\eta \right) P+\eta \dfrac{\chi
\kappa P N}{ 1+\chi \kappa N}
\end{array}
\right.
\end{equation}
with initial value
\begin{equation*}
N(0)=N_0=30 \times 10^3 \text{ and } P(0)=P_0=4 \times 10^3.
\end{equation*} 

\begin{table}[H] \centering
		\begin{tabular}{cccccc}
			\doubleRule
			\textbf{Symbol}  &  \textbf{Interpretation} & \textbf{Value}  &\textbf{Unit} &  \textbf{Method}  \\
			\hline
			$ 1/\mu_N $  
			& Life expectancy of hares
			& $1$
			&  year 
			& fixed
			\\
			$ \beta_N $  
			& Birth rate of hares
			& $1.6567$
			&  number of new born/year
			& fitted
			\\
			$ \delta $  
			& Carrying capacity of hares
			& $303000$
			&  year 
			& fitted
			\\
			$ \kappa $  
			& 
			& $3.2\,\times 10^{-5}$
			& 
			& fitted
			\\
			$ \chi $  
			& 
			& $0.11$
			&  
			& fitted
			\\
			$ 1/\mu_P $  
			& Life expectancy of Lynx
			& $7$
			&  year 
			& fixed
			\\
			$ \beta_P $  
			& Birth rate of Lynx
			& $8.5127$
			&  number of new born/year
			& fitted
			\\
			$ \eta$  
			& Extra mortality of searching Lynx
			& $9.24$
			&  year$^{-1}$
			& fitted
			\\
			$ \beta_P-\mu_P-\eta$  
			& Growth of searching lynx
			& $-0.8702$
			& 
			& fitted
			\\
			$ \eta \chi$  
			& Convertion rate 
			& $1.0164$
			& 
			& fitted \\
			\doubleRule
		\end{tabular} 
		\caption{\textit{List parameters for the model \eqref{7.1}, their interpretations, values and symbols. In this table we have fixed $\mu_N$ and $\mu_P$ and we have obtain all the remaining parameters by using a least square method between the data in Table \ref{Table1} the solution of the model \eqref{7.1}. The life expectancy of Snowshoe Hares is not known \cite{Chitty, Gilpin}. Here we fix the life expectancy of hares to be $1$ year (similarly to   \cite{Yukon}). In the wild a Canadian Lynx can live up to $14$ years. Here we fix the life expectancy to be $7$ years (see \cite{Elton} for more result). A Canadian lynx can have between 1 and 8 new babies \cite{Jackson}. So the estimation obtained for the birth rate of lynxes is still reasonable. }}\label{Table2}
	\end{table}

\begin{figure}[H]
\centering
\includegraphics[width=6in]{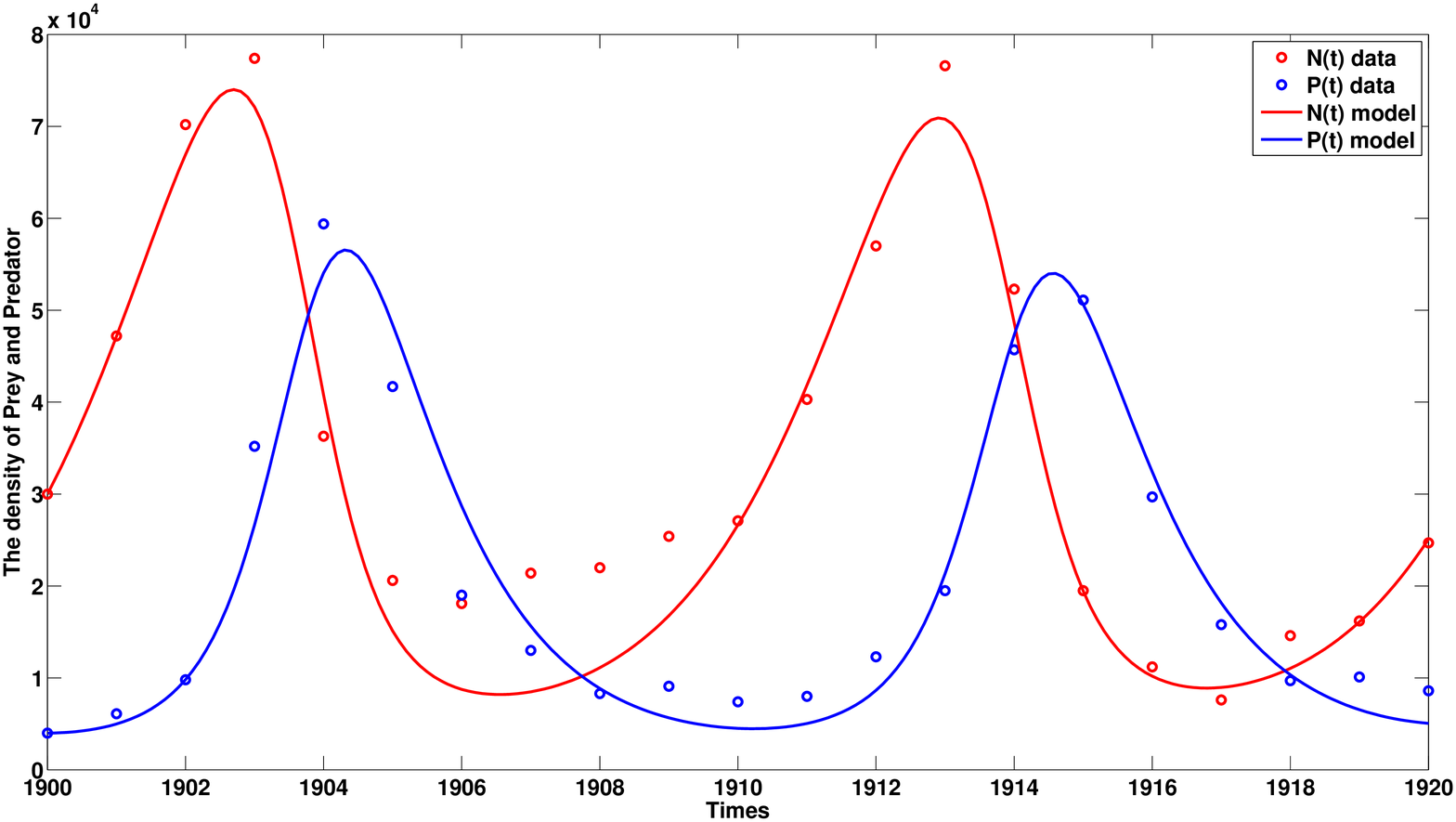} 
\caption{\textit{In this figure we run a simulation of the model \eqref{7.1} (solide lines)  compared with the data (circles). }}
\label{fig1}
\end{figure}

In section \ref{Section6}, we proved that the model \eqref{7.1} can be obtained as singular limit (when $\varepsilon \to 0$) of the following model 

\begin{equation}  \label{7.2}
\left\{
\begin{array}{ll}
\overset{\cdot}{N^\varepsilon}=\left( \beta _{N}-\mu _{N}\right) N^\varepsilon \left(
1 - \dfrac{N^{\varepsilon}}{\delta} \right) - \kappa N^\varepsilon
\,P_{S}^\varepsilon \vspace{0.1cm} \\
\overset{\cdot}{P^\varepsilon_{S}}=- (\mu _{P}+\eta ) P^\varepsilon_{S}-\dfrac{\chi }{
\varepsilon} \kappa N^\varepsilon P_{S}^\varepsilon+\dfrac{1}{\varepsilon} P_{H}^\varepsilon \vspace{0.1cm} \\
 
\overset{\cdot}{P^\varepsilon_{H}}=- \mu _{P}P^\varepsilon_{H} \hspace{0.6cm}+\dfrac{\chi }{
\varepsilon} \kappa N^\varepsilon P_{S}^\varepsilon - \dfrac{1}{\varepsilon} P_{H}^\varepsilon
+\beta _{P} \left(P_{S}^\varepsilon+P_{H}^\varepsilon \right)
\end{array}%
\right.
\end{equation}
and we fix the initial value
\begin{equation*}
N^\varepsilon(0)=N_0=30 \times 10^3  \geq 0,\, P^\varepsilon_{S}(0)=P_{S0} \geq 0 \text{ and
} P^\varepsilon_{H}(0)=P_{H0} \geq 0.
\end{equation*}
In Theorem \ref{TH6.3} we proved that for $\varepsilon$ small enough 
\begin{equation}  \label{7.3}
P^{\varepsilon}_{S}(t) \simeq \dfrac{1}{1+\chi
\kappa N(t)}P(t) \text{ and } P^{\varepsilon}_{R}(t) \simeq \left(1- \dfrac{1}{1+\chi
\kappa N(t)} \right) P(t)=\dfrac{\chi \kappa N(t)}{1+\chi
\kappa N(t)}  P(t).
\end{equation}
By using the value for $\chi \kappa$ estimated in Table \ref{Table2},  we obtain the following initial values for the model \eqref{7.2} 
\begin{equation} \label{7.4}
P^{\varepsilon}_{S0} =\dfrac{P_0}{1+\chi
\kappa N_0}=\dfrac{4 \times 10^3}{1+1.0164 \times 30 \times 10^3 } \text{ and } P^{\varepsilon}_{R0}= \dfrac{\chi \kappa N_0}{1+\chi
\kappa N_0}  P_0=\dfrac{1.0164 \times 30 \times 10^3}{1+1.0164 \times 30 \times 10^3} 4 \times 10^3.
\end{equation}

\begin{figure}[H]
\centering
\includegraphics[width=6in]{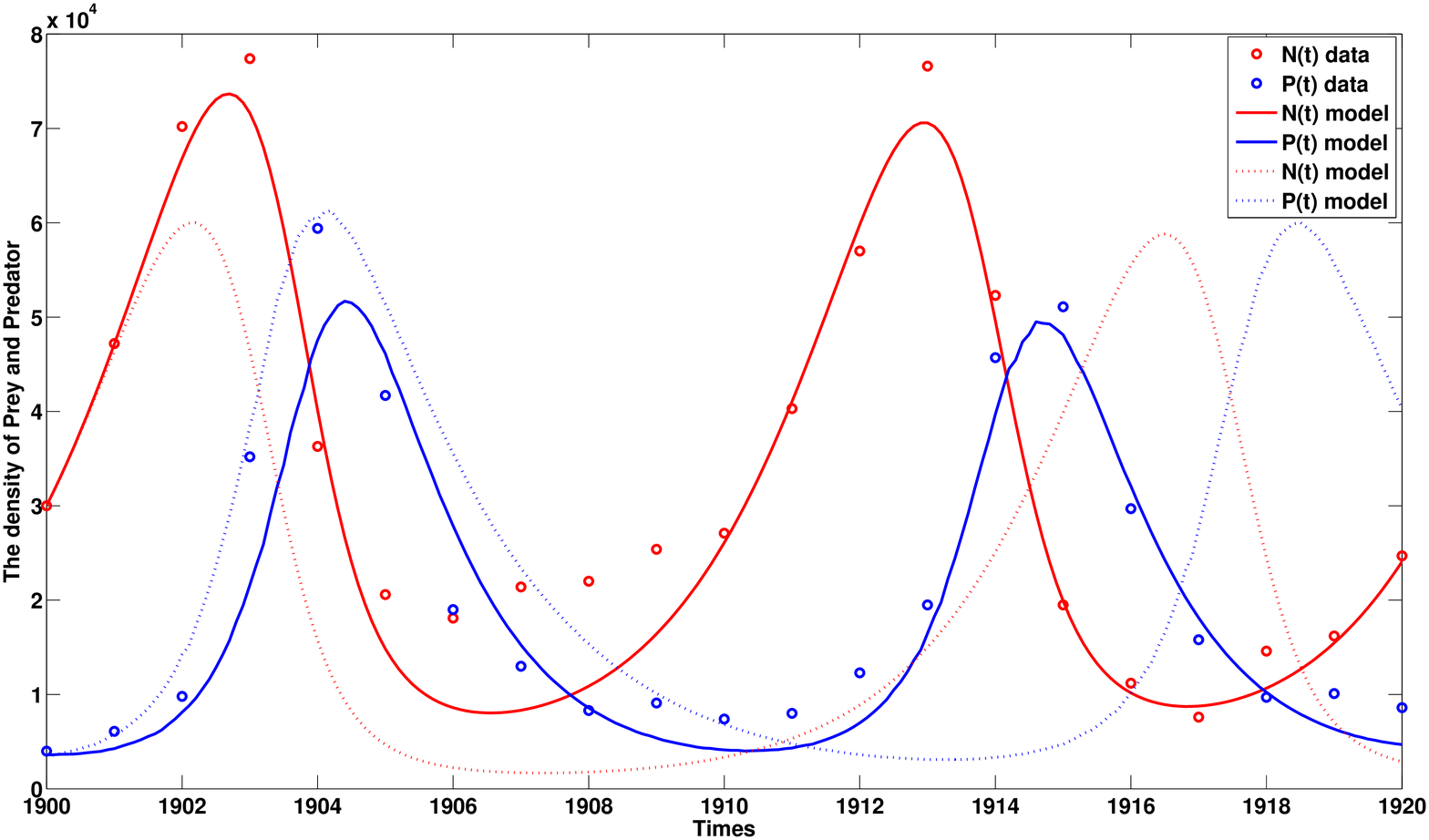} 
\caption{\textit{In this figure we run a simulation of the model \eqref{7.2} (solide and dotted lines)  compared with the data (circles). The solide lines correspond to $\varepsilon=10^{-4}$ and the dotted lines correspond to $\varepsilon=5.10^{-3}$.}}
\label{fig2}
\end{figure}

\begin{figure}[H]
\centering
\includegraphics[width=6in]{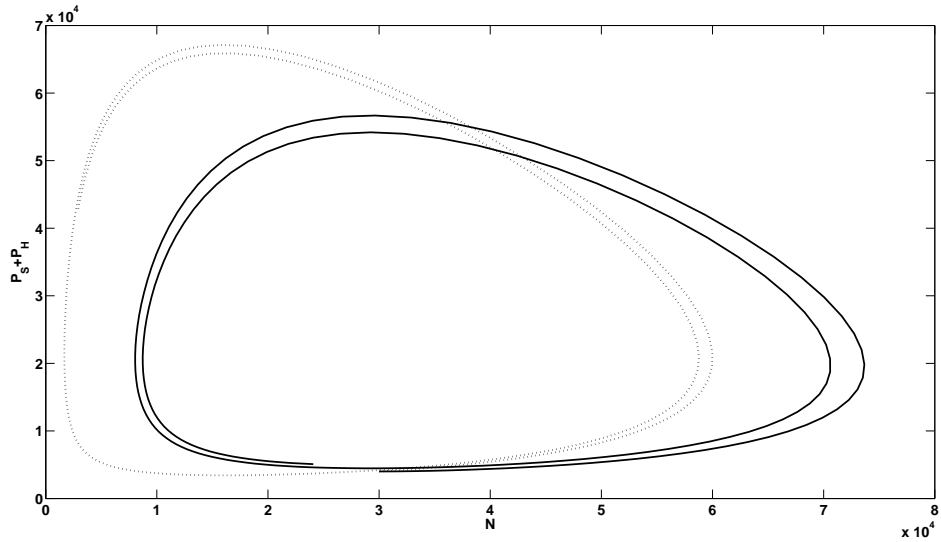} 
\caption{\textit{In this figure we run a simulation of the model \eqref{7.2} with $\varepsilon=10^{-4}$ for solide line and with $\varepsilon=5.10^{-3}$ for dotted line. }}
\label{fig3}
\end{figure}
From Figures \ref{fig1} and \ref{fig2}, we can see that $\varepsilon$ does not need to be very small ($\varepsilon= 10^{-4}$) to get an almost
perfect match of our model \eqref{7.2} with the Rosenzweig-MacArthur model \eqref{7.1}. Our simulations for hares and lynxes fit the data reported by the Hudson Bay Company. As we mentioned the main advantage with the model \eqref{7.2} is that we can separate the vital dynamic and consumption of preys (hares) to describe the behavior of the predators (lynxes). From our model \eqref{7.2}, people can study the interaction between predator and prey in detail and get more information.


\begin{thebibliography}{99}
\bibitem{Ardito-Ricciardi} A. Ardito and P. Ricciardi, Lyapunov functions
for a generalized Gause-type model, \textit{Journal of Mathematical Biology}%
, \textbf{33(8)} (1995), 816-828.

\bibitem{Cheng} K. S. Cheng, Uniqueness of a limit cycle for a predator-prey
system, \textit{SIAM Journal on Mathematical Analysis}, \textbf{12(4)}
(1981), 541-548.

\bibitem{Cheng-Hsu-Lin} K.S. Cheng, S.B. Hsu and S.S. Lin, Some results on
global stability of a predator-prey system, \textit{Journal of Mathematical
Biology}, \textbf{12(1)} (1982), 115-126.

\bibitem{Chitty} H. Chitty, The snowshoe rabbit inquiry, \textit{J. Anim. Ecol.} \textbf{17} (1948), 39-44.

\bibitem{Deuflhard-Roblitz} P. Deuflhard and S. Roblitz, \textit{A guide to numerical modelling in systems biology}, Vol. \textbf{12}. Springer, 2015.

\bibitem{Elton} C. S. Elton and M. Nicholson, The ten-year cycle in numbers of lynx in Canada, \textit{J. Anim. Ecol.} \textbf{11} (1942), 215-244. 

\bibitem{Gilpin} M. E. Gilpin,  ''Do hares eat lynx?'', \textit{The American Naturalist}, \textbf{107.957} (1973), 727-730.

\bibitem{Hale-Waltman} J. K. Hale and P. Waltman, Persistence in infinite-dimensional systems, SIAM J. Math. Anal. \textbf{20(2) }(1989), 388-395.

\bibitem{Hastings}  A. Hastings, \textit{Population Biology, concepts and models}, Springer, New York 1998.

\bibitem{Holling59a} C. Holling, The components of predation as revealed by
a study of small-mammal predation of the European pine sawfly, \textit{Can.
Entomol.} \textbf{91(5)} (1959), 293-320 .

\bibitem{Holling59b} C. Holling, Some characteristics of simple types of
predation and parasitism, \textit{Can. Entomol.} \textbf{91(7)} (1959),
385-398 .

\bibitem{Hsu-Hubbell-Waltman78a} S.B. Hsu, S.P. Hubbell and P. Waltman, A
contribution to the theory of competing predators, \textit{Ecological
Monographs}, \textbf{48(3)} (1978), 337-349.

\bibitem{Hsu-Hubbell-Waltman78b} S.B. Hsu, S.P. Hubbell and P. Waltman,
Competing predators, \textit{SIAM Journal on Applied Mathematics}, \textbf{%
35(4)} (1978), 617-625.

\bibitem{Jackson}  P. Jackson, et al. \textit{Les f\'elins:[toutes les esp\`eces du monde]}. Delachaux et Niestl\'e, 1996.

\bibitem{Kuang-Freedman} Y. Kuang and H.I. Freedman, Uniqueness of limit
cycles in Gause-type models of predator-prey systems, \textit{Mathematical
Biosciences}, \textbf{88(1)} (1988), 67-84.

\bibitem{Magal-Zhao} P. Magal and X.-Q. Zhao, Global attractors in uniformly
persistent dynamical systems, \textit{SIAM J. Math. Anal.,} \textbf{37}
(2005), 251-275.

\bibitem{Murray}  J. Murray, \textit{Mathematical Biology}, Springer, New York, 1989.

\bibitem{Rosenzweig} M. L. Rosenzweig and R. H. MacArthur, Graphical representation and stability conditions of predator-prey interactions, \textit{The American Naturalist}, \textbf{97(895)} (1963), 209-223.

\bibitem{Smith} H. L. Smith, \textit{Monotone Dynamical Systems, an
introduction to the theory of competitive and cooperative systems}, Math.
Surveys and Monographs, \textbf{41}, American Mathematical Society,
Providence, Rhode Island (1995).



\bibitem{Turchin} P. Turchin, \textit{Complex population dynamics: a theoretical/empirical synthesis} (Vol. \textbf{35}). Princeton university press (2003).

\bibitem{Yukon} \url{ https://yukon.ca/en/snowshoe-hare }

\bibitem{Zhang} Z. Zhang, Proof of the uniqueness theorem of limit cycles of generalized Li\'enard equations, \textit{Applicable Analysis}, \textbf{23(1-2)} (1986), 63-76.


\end{thebibliography}
\end{document}